\title[Stationarity in gene expression networks]{Stationarity and inference in multistate promoter models of stochastic gene expression via stick-breaking measures}
\author{William Lippitt}
\address{Biostatistics, University of Colorado Anschutz Medical Campus, Aurora, CO 80045}
\email{william.lippitt@cuanschutz.edu}
\author{Sunder Sethuraman}
\address{Mathematics, University of Arizona, Tucson, AZ 85721}
\email{sethuram@math.arizona.edu}
\author{Xueying Tang}
\address{Mathematics, University of Arizona, Tucson, AZ 85721}
\email{xytang@math.arizona.edu}
\numberwithin{equation}{section}
\newcommand{\N}{{\mathbb N}}
\newcommand{\X}{{\mathfrak X}}
\newcommand{\T}{{\mathbf T}}
\newcommand{\M}{{\mathbf M}}
\newcommand{\p}[1]{{\mathcal P\left( #1\right)}}
\newcommand{\E}[1]{{\mathbf E\left[ #1\right]}}
\renewcommand{\P}{{\mathbf P}}
\newtheorem{theorem}{Theorem}[section]
\newtheorem{cor}[theorem]{Corollary}
\newtheorem{prop}[theorem]{Proposition}
\newtheorem{defi}[theorem]{Definition}
\newtheorem{ex}[theorem]{Example}
\begin{document}

\begin{abstract}
In a general stochastic multistate promoter model of dynamic mRNA/protein interactions, we identify the stationary joint distribution of the promoter state, mRNA, and protein levels through an explicit `stick-breaking' construction of interest in itself.
 This derivation is a constructive advance over previous work where the stationary distribution is solved only in restricted cases.
Moreover, the stick-breaking construction allows to sample directly from the stationary distribution, permitting inference procedures and model selection. In this context, we discuss numerical Bayesian experiments to illustrate the results.

\end{abstract}

\subjclass[2020]{92Bxx, 37N25, 62P10, 62E15}

 \keywords{multistate, promoter, mRNA, protein, Bayesian, inference, model validation, stick-breaking, Dirichlet, Markovian, stationary distribution, constructive}

\maketitle

\section{Introduction}

Relatively recent models of mRNA creation and degradation in cells incorporate the notion of a stochastic `promoter' which influences birth rates and serves as a surrogate to the complex underlying structure of chemical reactions.  In such models, the stationary distribution of mRNA levels is of interest, given in particular that now readings from cells can be taken.

The multistate promoter process is a more involved model than the simple birth-death process with constant rates in which the evolution is somewhat regular and the stationary distribution is Poisson.  In particular, observations in types of cells indicate that the production of mRNA in the multistate process can be `bursty' and the levels of mRNA in stationarity can have heavy, non-Poissonian tails \cite{1Albayrak}, \cite{12Herbach et al} and references therein.  In this respect, the multistate mRNA process can reproduce such a phenomenon, and is now receiving much attention as a possible complex yet tractable model \cite{5Coulon}, \cite{7Dattani}, \cite{Herbach}, \cite{13Innocenti}, \cite{22Zhou}, and references therein.

The general multistate promoter process is a pair evolution $(E, M)$ where the state of the promoter $E\in \X$ belongs to discrete finite or countably infinite set $\X$ and the level of mRNA $M\in \{0,1,2,\ldots\}$ is a nonnegative integer.  The dynamics of the pair is that when $E=i$, the birth rate of $M$ to $M+1$ is $\beta_i\geq 0$ and the death rate of $M$ to $M-1$ is $\delta M$, proportional to $M$ with $\delta>0$, degradation being modeled independent of the state $E$.  On the other hand, for fixed $M$, the promoter $E=i$ switches to a different state $E=j$ with rate $G_{i,j}\geq 0$.  The parameters $\beta=\{\beta_i\}_{i\in \X}$ and `generator' $G = \{G_{i,j}\}_{i,j\in \X}$, where $G_{i,i} = -\sum_{j\neq i} G_{i,j}$, completely specify the process.

In \cite{Cao}, \cite{Ham}, \cite{14Kim}, \cite{Peccoud}, \cite{Ham}, the stationary distribution for mRNA levels $M$ is identified for multistate processes when $\X=\{1,2\}$ as a scaled Beta-Poisson mixture.  More generally, in \cite{Herbach}, when $\X=\{1,2,\ldots, n\}$ and the generator $G$ is such that $G_{i,j} = \alpha_j$ is independent of $i\neq j$, it is shown that the stationary distribution is a scaled Dirichlet-Poisson mixture.  In the `refractory' case when $\beta_i>0$ for exactly one $i\in \X=\{1,2,\ldots, n\}$ and $G$ is now allowed to be general where $G_{i,j}$ may depend on $i$, \cite{Herbach} derives a scaled Beta product-Poisson mixture.  In \cite{Zhou-Liu}, a general hypergeometric formula is given for general $G$ generators.   Although a certain generating function of the stationary distribution in the general model is known to satisfy a PDE in terms of parameters \cite{Herbach}, a constructive solution for the stationary distribution is not known for the general multistate rates promoter process.

In this context, the first aim of this note is to consider a multistate promoter process with general class creation rates $\beta$ and promoter switching rates $G$ in the context of `stick-breaking'. 
We identify an explicit form of the stationary distribution of $(E,M)$ in terms of a `Markovian stick-breaking' mixture distribution, reminiscent of the stick-breaking form of the Dirichlet process used much in nonparametric Bayesian statistics. That `stick-breaking' would be involved in such a characterization in this mRNA dynamics context was unexpected.  We also state formulas for certain moments aided by the `stick-breaking' formulation.

A second aim is to conduct statistical inference based on synthetic stationarily distributed data, with a future view toward inference with respect to laboratory biological mRNA data.  This has been considered in the literature for certain multistate models \cite{12Herbach et al} and references therein; see also \cite{Lin-Buchler}.  In this respect, we exploit the stick-breaking form of the stationary distribution to perform the inference which seems to allow for good computation and error bounds.

Still a third aim of this work is to extend results from the multistate mRNA model to a general multistate mRNA model with {\it protein} interactions, namely a process $(E, M, P)\in \X\times \{0,1,2,\ldots\}^2$ where $(E,M)$ rates are the same and the rates of $P$ to $P+1$ is $\alpha M$ and $P$ to $P-1$ is $\gamma P$ where $\alpha,\gamma>0$.  We mention, the identification of the stationary distribution in such a model, even in this `non-feedback' network, was posed as an open problem in \cite{Herbach}; we leave to future work to consider ramifications in models with `feedback'.  Recently, in \cite{Choudhary}, protein interactions have been considered in the `refractory' case where $\beta_i>0$ for exactly one state $E=i$ in terms of P\'olya urn models.  We mention also previous work on on-off promoter models \cite{Shah}.  We identify here in the general setting the stationary distribution of $(E,M,P)$ in terms of the `stick-breaking' apparatus, in particular interestingly `clumped' versions, and discuss computation of moments.

\medskip

{\it Aim 1.}  The identification of the stationary distribution of $(E,M)$ connects interestingly to disparate models of inhomogeneous Markov chains of their own interest.  First, by a Poisson representation introduced in \cite{Gardiner} for chemical reaction models, one can associate a piecewise deterministic Markov process $X$ where  $X$ satisfies an ODE depending on the process $E$ and represents a mass action kinetics process with respect to the levels of the promoter `chemicals'.  Then, it turns out at stationarity $M|(E,X) \sim {\rm Poisson}(\beta \cdot X)$.  The question now is what is the distribution of $(E,X)$.

Next, consider a discrete time inhomogeneous Markov chain $\{M_n\}_{n\geq 0}$ on $\X$ where the transition kernel from time $n$ to $n+1$ is $(I+ G/n)$.  In a sense, $\{M_n\}_{n\geq 0}$ is a discrete time version of the process $E$.  In \cite{DLS} (see also \cite{DS}), the limit of the empirical distribution of this chain is identified as a Markov stick-breaking measure $\nu_G$.  

Finally, \cite{BC}, in the study of `freezing MC's', generalizing those in \cite{DS}, considered the piecewise deterministic Markov process $(E,X)$ (although its connection with mRNA dynamics was perhaps not known).  They showed at stationarity, $X$ has the law of the empirical distribution limit $\nu_G$, among other results.
In combination with the stick-breaking characterization in \cite{DLS}, this  leads to the stick-breaking Poisson mixture identification of $(E,M)$ (Theorem \ref{cormultistatestationaryconstruction}).  Related moment computations are given in Section \ref{moments_subsect}.  Also, the notion of the `identifiability' with respect to the mRNA level $M$ is discussed in Section \ref{identifiability_sect}.

\medskip

{\it Aim 2.} 
In Section \ref{sec:estimation}, we discuss parameter estimation and model selection under a Bayesian framework. We assume that observed data come from the stationary distribution of the associated mRNA model with unknown parameters $(\beta, G)$.
In Section \ref{subsec:estimation}, with priors placed on the parameters, we describe a Gibbs sampler to draw samples from the posterior distribution. The empirical posterior means are used as estimators of $(\beta, G)$. Given estimated parameters from several candidate models with different number of states or different sparsity structures in $(\beta, G)$, we discuss in Section \ref{sec:selection} how to use Bayesian Information Criterion (BIC) to select the model underlying the observed data.

The key step in both inference tasks is to evaluate the likelihood function (the probability of observing the data) which are not in a closed-form. We utilize truncations of the stick-breaking form of the stationary distribution to approximate the likelihood with Monte Carlo simulations. 
The discussed procedures in Sections \ref{subsec:estimation} and \ref{sec:selection} are applied to synthetic datasets with various choices of $(\beta, G)$ for $|\X| =$ 2 or 3. In our experiments, when the sample size is large, the model parameters can be estimated accurately and the underlying models can be selected correctly with high probability. 

\medskip

{\it Aim 3.}  The protein model $(E, M, P)$ mentioned earlier can be analyzed by writing the interactions in terms of $\tilde E = (E,M)$ and $\tilde M = P$, where now $\tilde E$ is in the role of being a `promoter' with respect to protein levels $\tilde M$.  Since the promoter state space $\X\times \{0,1,2,\ldots\}$ is not finite, direct application of results in \cite{DLS}, \cite{BC} may not be possible as the transition operator $(I-\tilde G/n)$ will not be stochastic, that is $\tilde G$ with respect to $\tilde E$ transitions will not be bounded.  The idea however in Section \ref{protein_sect} is to represent the stationary measure via `clumped' versions of associated stick-breaking measures introduced in \cite{DLS}, perhaps of interest in itself (Theorem \ref{protein_thm}).

\medskip

The plan of the paper is to introduce notation and definitions of stick-breaking measures and their `clumped' forms in Section \ref{def_sect}.  In Section \ref{main_sect}, we discuss the relationship between certain time-inhomogeneous Markov chains, stick-breaking measures, piecewise deterministic Markov processes and multistate mRNA promoter models and formulate Theorem \ref{cormultistatestationaryconstruction}; in Section \ref{moments_subsect}, some moments are computed, and in Section \ref{identifiability_sect}, identifiability of parameters is discussed.  In Section \ref{protein_sect}, we discuss models which incorporate protein interactions and state Theorem \ref{protein_thm} which is then shown in Section \ref{clumped_sect}.  In Section \ref{sec:estimation}, we discuss how to utilize the stick-breaking constructions to estimate model parameters based on data from the stationary distribution (Section 5.1) and how to perform model selection (Section 5.2). Then, in Section \ref{conclusion}, we conclude.

\section{Stick-breaking measure representations and other definitions}
\label{def_sect}

We first introduce notation on spaces and matrices used throughout the article in Section \ref{notation}, before defining the notion of a `stick-breaking' measure and related ingredients in Section \ref{stick-breaking-definitions}.  In Section \ref{clumped}, we discuss the notion of a `clumped' representation of the stick-breaking measure which will be useful in the later discussion of protein interactions.

\subsection{Notation on spaces and conventions}
\label{notation}

We will concentrate on discrete spaces $\X$, finite or countably infinite.  Denote the space of probability measures on $\X$ by 
$$\Delta_\X=\left\{(p_i)_{i\in\X}\in[0,1]^\X:\sum_{i\in\X}p_i=1\right\}.$$

Define also that a {\it generator matrix} $G$ on $\X$ is the square matrix or operator $G=(G_{i,j})_{i,j\in\X}$ such that $G_{i,j}\geq 0$ when $i\neq j$ and $\sum_j G_{i,j} = 0$ for each $i\in \X$.  If the entries of $G$ are bounded, we say $G$ is a {\it bounded} generator matrix.  We say that $G$ is an {\it irreducible} generator matrix when for each pair $(i,j)\in \X^2$ there is a power $k=k_{i,j}$ such that $(G^k)_{i,j}>0$. 
We say $G$ has a {\it stationary} distribution $\mu\in \Delta_\X$ when $\mu$ is a left eigenvector with eigenvalue $0$, that is
$\sum_i \mu_iG_{i,j} =0$ for all $j\in \X$.  When $G$ is irreducible and has a stationary distribution $\mu$, then $\mu$ is unique.  We observe that on a finite state space $\X$, $G$ is bounded, and when $G$ is irreducible, it has a unique stationary distribution $\mu$.

We remark that a bounded generator matrix can always be (non-uniquely) decomposed as $\theta(Q-I)$ where $\theta>0$ and $Q$ is a stochastic matrix or operator. When $G$ is irreducible, then $Q$ is irreducible and additionally $G$ and $Q$ have the same stationary probability vector(s) $\mu$ (independent of the choice of $\theta$).

We now enumerate several conventions used throughout the article.

\begin{enumerate}[$\bullet$]
	\item If $v=\mathbb{R}^\X$, then $D(v)$ denotes a square diagonal matrix or operator over $\X$ whose $i$th entry is $v_i$ for each $i\in\X$. If $A\subset\X$, then $D(A)=D(v)$ where $v=\sum_{i\in A}e_i$ where $\{e_i\}_{i\in \X}$ is the standard basis of $\mathbb{R}^\X$.
	\item $\N=\{1,2,3,...\}$ and $\N_0=\{0,1,2,...\}$
	\item We define empty sums $\sum_\emptyset=0$, empty scalar products $\prod_\emptyset=1$, and empty matrix products as the identity $\prod_\emptyset=I$.
	\item Products:  For a collection of matrices $\{M_j\}_{j=1}^k$, we denote the standard forward order product as $\prod_{j=1}^k M_j=M_1\cdot M_2\cdots M_k$ and the non-standard reverse order product as $\prod_{j=1}^{k;(R)}M_j=M_k\cdot M_{k-1}\cdots M_1$.
	\item Adjoints:  
	Given a probability vector $\mu$ over $\X$, we define the adjoint $A^*$ of a square matrix or operator $A$ on $\X$ with respect to $\mu$ by $A^*=D(\mu)^{-1}A^TD(\mu)$. For a generator $G$ with $G=\theta(Q-I)$ having unique stationary distribution,  we always understand $G^*$ and $Q^*$ to be adjoints taken with respect to the associated stationary distribution.

\end{enumerate}

\subsection{Stick-breaking measures}
\label{stick-breaking-definitions}

Before describing a generalization of the Dirichlet process with respect to $\theta>0$ and a probability vector $\mu$ on $\X$, which will form the backbone of our work, we first define basic notions.  The classical Dirichlet process, much used in Bayesian nonparametric statistics, is a distribution on the space of probability measures on $\X$ with the property that a sample measure $D$ is such that the joint distribution of $\big(D(A_1),\ldots, D(A_k)\big)$ is that of a Dirichlet distribution with parameters $\big(\theta\mu(A_1),\ldots, \theta\mu(A_k)\big)$ for finite partitions $\{A_i\}_{i=1}^k$ of $\X$.  

Such a process admits a `stick-breaking' representation involving two ingredients:  a GEM residual allocation model as well as an independent sequence of i.i.d. random variables $\{T_i\}_{i\geq 1}$ on $\X$ with common distribution $\mu$.  See \cite{Ghosal_VanVaart,Muller} for more on stick-breaking measures. The GEM model is defined as follows.

\begin{defi}[GEM residual allocation model]
	Let $(Z_j)_{j\ge1}$ be an iid sequence of \emph{Beta}$(1,\theta)$ variables, and define
	$$P_j=Z_j\prod_{i=1}^{j-1}(1-Z_i).$$
	Then, $\P=(P_1,P_2,...)$ is said to have \emph{GEM}$(\theta)$ distribution.
\end{defi}

Define now the (random) `stick-breaking' measure on $\X$,
$$\nu = \sum_{j\geq 1} P_j \delta_{T_j}.$$
It is well-known that the law of $\nu$ is that of the Dirichlet process on $\X$ with parameters $(\theta, \mu)$.

We now consider a generalization where $\{T_i\}_{i\geq 1}$ is a a stationary Markov chain on $\X$ with stationary distribution $\mu$.  Such a generalization was first considered in \cite{DLS} in the context of empirical distribution limits of `simulated annealing' time-inhomogeneous Markov chains.

\begin{defi}[MSBM$(G)$, MSBMI$(G)$]\label{defnmsbm}
	Let $G$ be an irreducible, bounded generator matrix over $\X$, with a unique stationary distribution $\mu$ and with decomposition $G=\theta(Q-I)$. Let $\P\sim$ \emph{GEM}$(\theta)$ and let $\T$ be a stationary homogeneous Markov chain independent of $\P$ and having kernel $Q$ with stationary distribution $\mu$. Then, the random measure
	\begin{equation}\label{eq:MSBM}
	\nu_G=\sum_{j\ge1}P_j\delta_{T_j}
	\end{equation}
	taking values in $\Delta_\X$ is said to have distribution \emph{MSBM}$(G)$. Here, MSBM stands for Markovian stick-breaking measure. The pair $(T_1,\nu_G)$ is said to have \emph{MSBMI}$(G)$ distribution (MSBM and Initial).  Note that here $T_1$ is distributed according to $\mu$.
\end{defi}

	The construction of the `stick-breaking' object with \emph{MSBM}$(G)$ distribution given in the above definition is many to one due to the choice of decomposition $G=\theta(Q-I)$, though the distribution itself is independent of this choice. Valid choices of decomposition are indexed by the selection of $\theta$, where valid choices of $\theta$ fall in the interval $[\theta(G),\infty)$ where $\theta(G)=\sup_{i\in\X}|G_{i,i}|$. The series in the stick-breaking construction has the fastest rate of convergence when $\theta= \sup_{i\in \X}|G_{i,i}|$ is smallest. 
	
		We remark exactly in the situation when $G$ permits a decomposition $G=\theta(Q-I)$ such that $Q$ is constant stochastic with rows $\mu$, we determine that \emph{MSBM}$(G)=$ \emph{Dirichlet}$(\theta,\mu)$.  In this way, since $\{T_i\}_{i\geq 1}$ is i.i.d. exactly when $Q$ is constant stochastic, the MSBM measures generalize the Dirichlet process. 
	See \cite{DLS} for more discussion.
	
	Moreover, we note that the stick-breaking construction allows to bound the error in truncating the series.  This will be useful for later inference.  Indeed, for $k\geq 0$, $\sum_{j\geq k+1} P_j \delta_{T_j} \leq \sum_{j\geq k+1} P_j = \prod_{j=1}^k (1-X_j)$.  Since $-\log(1-X_j) \stackrel{d}{=}{\rm Exp}(\theta)$, we have that
	$-\log \prod_{j=1}^k(1-X_j) \stackrel{d}{=}Y_k:={\rm Gamma}(k,\theta)$. Then, the chance the error is greater than $\lambda$ is 
	\begin{align}
	\label{truncation error}
	P(\exp(-Y_k)\geq \lambda) = P(Z_\lambda \geq k)
	\end{align}
	where $Z_\lambda \stackrel{d}{=}{\rm Poisson}(-\theta\log(\lambda))$.

\subsection{Clumped stick-breaking constructions}
\label{clumped}
It will be useful to describe `clumped' representations of the MSBM$(G)$ stick-breaking measure, later useful in discussion of protein interactions.  
Let $G$ be an irreducible bounded generator matrix with stationary distribution $\mu$. Define $\theta(G)=\max_i|G_{i,i}|$.
For each $\theta\geq\theta(G)$, let $\bf P^\theta$ have GEM$(\theta)$ distribution and let $\bf T^\theta$ independent of $\bf P^\theta$ be a stationary Markov chain with transition kernel $Q^\theta=I+G/\theta$. 

Define
$$\nu^\theta(\ \cdot\ )=\sum_{j=1}^\infty P_j^\theta\delta_{T_j^\theta}(\ \cdot\ )$$
Each $\nu^\theta$ is a stick-breaking representation of MSBM$(G)$:
$\nu^\theta\ \stackrel d=\ \nu^{\theta(G)}$ for all $\theta\geq \theta(G)$.
Here, $\bf T^\theta$ is a Markov chain which may repeat, that is it may be that
$\p{T_j^\theta=T_{j+1}^\theta}>0$.

We now recall a `clumped' stick-breaking construction using the Markov chain $\bf S$ whose law corresponds to the ${\bf T^\theta}$ non-repeating transitions (cf. \cite{DLS} for more discussion).  Let $\bf S$ be a homogeneous Markov chain with initial distribution $\mu$ and transition kernel 
$$K_{i,j}=\frac{G_{i,j}}{-G_{i,i}}\mathbbm 1(i\neq j)$$

Next let $\bf Y$ be a random sequence such that $\bf Y|S$ is an independent sequence of $\{{\rm Beta}(1,-G_{S_j,S_j})\}_{j\geq 1}$ variables. Define $\bf R$ from $\bf Y$ as a residual allocation model
$$R_j=Y_j\prod_{i=1}^{j-1}(1-Y_i).$$
Form the associated stick-breaking measure
$$\nu(\ \cdot\ )=\sum_{j=1}^\infty R_j\delta_{S_j}(\ \cdot\ ).$$
Then,
$\nu\ \stackrel d=\ \nu^\theta$, and moreover we have the following `clumped' statement.

\begin{prop}[cf. Theorem 2.13 \cite{DLS}]\label{propaltMSBM}
Let $G$ be an irreducible, bounded generator matrix over $\X$ with unique stationary distribution $\mu$. Define stochastic kernel
$$K_{i,j}=\frac{G_{i,j}}{-G_{i,i}}\mathbbm 1(i\neq j).$$ Let $\bf T$ be a homogeneous Markov chain with transition kernel $K$ and initial distribution $\mu$. Let $\bf Z$ be a random sequence of [0,1]-valued random variables such that given $\bf T$, $\bf Z$ is an independent sequence with $Z_j\sim$Beta$(1,-G_{T_j,T_j})$. Form the residual allocation model ${\bf R} = \{Z_j\prod_{i=1}^{j-1}(1-Z_i)\}_{j\geq 1}$.  Then,
$$\left(T_1,\ \sum_{j=1}^\infty R_j\delta_{T_j}(\ \cdot\ )
\right)\ \sim\ \text{MSBMI}(G)$$
\end{prop}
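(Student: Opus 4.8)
The plan is to start from the (possibly repeating) Markovian stick-breaking construction of Definition~\ref{defnmsbm} and to recover the stated clumped construction by grouping the maximal runs of consecutive equal states. Fix a decomposition $G=\theta(Q-I)$ with $Q=I+G/\theta$, let $\P^\theta\sim\mathrm{GEM}(\theta)$ with underlying stick variables $U_j\sim\mathrm{Beta}(1,\theta)$ so that $P_j^\theta=U_j\prod_{i=1}^{j-1}(1-U_i)$, and let $\T^\theta$ be an independent stationary chain with kernel $Q$ and initial law $\mu$; then $(T_1^\theta,\nu_G)$ with $\nu_G=\sum_j P_j^\theta\delta_{T_j^\theta}$ realizes $\mathrm{MSBMI}(G)$. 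Write $S_1,S_2,\ldots$ for the successive distinct states visited by $\T^\theta$ (its jump chain), $L_1,L_2,\ldots$ for the corresponding run lengths, $B_m$ for the $m$th block of indices, and $R_m=\sum_{j\in B_m}P_j^\theta$. Since each block sits at a single state, one has the pathwise identities $\nu_G=\sum_m R_m\delta_{S_m}$ and $S_1=T_1^\theta$, so it suffices to show that the clumped data $(\mathbf S,\mathbf R)$ has exactly the law prescribed in the statement (with $\mathbf S$ playing the role of $\T$ and the breaking fractions below playing the role of $\mathbf Z$).

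First I would record the standard jump-chain facts. Since $Q_{i,i}=1-(-G_{i,i})/\theta$ and $Q_{i,j}=G_{i,j}/\theta$ for $i\neq j$, the jump chain $\mathbf S$ has transition kernel $K_{i,j}=Q_{i,j}/(1-Q_{i,i})=G_{i,j}/(-G_{i,i})$ for $i\neq j$, matching the kernel $K$ of the statement, while $S_1=T_1^\theta\sim\mu$. Moreover, by the Markov property (with the memorylessness that lets the run observed from time $1$ be a fresh geometric), given $\mathbf S$ the lengths $L_m$ are independent with $L_m\sim\mathrm{Geometric}(p_{S_m})$, where $p_i:=(-G_{i,i})/\theta$.

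The crux is to identify the conditional law of the clumped weights. Setting $1-V_m:=\prod_{j\in B_m}(1-U_j)$ for the surviving stick fraction after block $m$, the residual-allocation bookkeeping gives $R_m=V_m\prod_{i<m}(1-V_i)$, so it remains to show that, given $\mathbf S$, the $V_m$ are independent with $V_m\sim\mathrm{Beta}(1,-G_{S_m,S_m})$. Here I would use that the i.i.d.\ pieces $U_j$ are independent of $\T^\theta$, hence of the block decomposition; thus, conditionally on $\mathbf S$ and $(L_m)$, the $V_m$ are functions of disjoint runs of i.i.d.\ pieces and so are independent, and it suffices to treat one block. For a single block, $-\log(1-V_m)=\sum_{j\in B_m}\big(-\log(1-U_j)\big)$ is a sum of $L_m$ i.i.d.\ $\mathrm{Exp}(\theta)$ variables, and the key computation is that a $\mathrm{Geometric}(p)$ sum of $\mathrm{Exp}(\theta)$ variables is $\mathrm{Exp}(\theta p)$, most transparently through the Laplace transform $\sum_{k\geq1}(1-p)^{k-1}p\,(\theta/(\theta+s))^k=\theta p/(\theta p+s)$. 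With $p=p_{S_m}$ this gives $\mathrm{Exp}(-G_{S_m,S_m})$, i.e.\ $V_m\sim\mathrm{Beta}(1,-G_{S_m,S_m})$, and integrating out $(L_m)$ preserves conditional independence across $m$.

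Combining these steps, the clumped pair $(S_1,\sum_m R_m\delta_{S_m})$ has precisely the law of the construction in the statement, while equaling $(T_1^\theta,\nu_G)\sim\mathrm{MSBMI}(G)$ pathwise; matching laws then yields the claim. I expect the only delicate point to be the bookkeeping in the crux step: one must keep the chain-driven randomness (the run lengths $L_m$) and the GEM-driven randomness (the pieces $U_j$) properly separated, so that conditioning on $\mathbf S$ leaves independent $\mathrm{Beta}(1,\theta)$ pieces supported on disjoint blocks, which is exactly what powers the geometric-sum-of-exponentials collapse.
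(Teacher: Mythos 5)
Your proof is correct and follows essentially the route the paper indicates: the paper states this result by citation to Theorem 2.13 of \cite{DLS}, and the surrounding discussion in Section \ref{clumped} describes exactly your mechanism, namely clumping the maximal runs of the repeating chain $\mathbf{T}^\theta$ into the jump chain $\mathbf{S}$ and collapsing each block's GEM pieces. Your execution of the key step --- that a $\mathrm{Geometric}(-G_{i,i}/\theta)$ sum of $\mathrm{Exp}(\theta)$ variables is $\mathrm{Exp}(-G_{i,i})$, giving the $\mathrm{Beta}(1,-G_{T_j,T_j})$ fractions --- together with the careful separation of the chain-driven run lengths from the independent stick pieces, is sound and matches the cited argument.
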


\section{Time-inhomogeneous MCs, PDMPs, and multistate mRNA promoter processes}
\label{main_sect}

We consider now seemingly unrelated processes, which however in combination bear upon the multistate mRNA promoter process.  In the main section, we deduce results on the associated stationary distribution and in Section \ref{moments_subsect} on its moments.  We also discuss identifiability of parameters with respect to stationary mRNA levels in Section \ref{identifiability_sect}.

The first process is a time-inhomogeneous Markov chain, considered in \cite{DLS}, \cite{DS} with respect to certain `simulated annealing' models.

\begin{defi}[Inhomogeneous Chain $\M=(M_n)_{n\ge1}$ (cf. \cite{DLS}]
	Let $G$ be a bounded, irreducible generator matrix on a discrete space $\X$. We associate to $G$ the discrete time Markov chain $\M=(M_n)_{n\ge1}$ with state space $\X$ having transition kernels
	$$K_n=I+\frac Gn\mathbbm1(n>N)$$
	for sufficiently large $N$ that $K_n$ is stochastic. We denote the empirical measure of $\M$ up to time $n$ by
	$$\nu^n=\frac1n\sum_{j=1}^n\delta_{M_j}.$$
\end{defi}

In words, the Markov chain $\M$ stays on the state it is at with larger probability as $n$ grows, and switches states with probability of order $O(1/n)$.  In this way, the states in $\X$ can be considered `valleys' from which it becomes more difficult to leave as time increases.  Nevertheless, there will be an infinite number of switches in the chain.

The second process is a type of piecewise deterministic Markov process (PDMP)--informally, a pair $(X(t),E(t))$ such that $E(t)$ is a Markov jump process on $\X$ and, if $\{t_n\}_{n\geq 1}$ are the jump times of $E(t)$, then $X(t)$ evolves deterministically on each interval $[t_n,t_{n+1})$ in a manner determined by $E(t_n)$. Such a process is determined by the jump rates of $E(t)$, the transition measure of $(X(t),E(t))$, and the flows governing the deterministic behavior of $X(t)$ between jumps.  See \cite{Davis} for a more general and precise definition.

\begin{defi}[Exponential Zig-zag Process (cf. \cite{BC})]
	An exponential zig-zag process is a PDMP $(E(t), X(t))$ taking values in $\X\times \Delta_\X$ with infinitesimal generator
	$$\mathcal L_Zf(i,x)=(e_i-x)\cdot\triangledown_xf(i,x)+\sum_{j\neq i}G_{i,j}[f(j,x)-f(i,x)]=(e_i-x)\cdot\triangledown_xf(i,x)+\sum_jG_{i,j}f(j,x)$$
	where $G$ is an irreducible generator matrix on a finite space $\X$.  Such a process has a unique stationary distribution (cf. Section 3 \cite{BC}). 
\end{defi}

In words, the $E$ process switches according to rates $G$.  However, depending on the current state $E=i$, the $X_j$ values decrease at rate proportional to $X_j$ for $j\neq i$ and $X_i$ increases at rate $1-X_i$.

We now state carefully the multistate mRNA promoter process.

\begin{defi}[Multistate promoter process (cf. \cite{Herbach})]\label{defnmultistate}
	Let $G$ be an irreducible generator matrix on a finite space $\X$. Consider the jump Markov process $(E(t),M(t))$ taking values in $\X\times\N_0$ with transition rates
	$$(i,m)\rightarrow(j,n)\ \text{ at rate }\ \left\{\begin{array}{ccc}
	G_{i,j} & & n=m\\
	\beta_i & & i=j,\ n=m+1\\
	\delta m & & i=j,\ n=m-1\\
	0 & & \text{otherwise}
	\end{array}\right.$$
	for $i,\ j\in\X$ and $m,\ n\in\N_0$. We denote the stationary distribution of this process
	as $\pi_1(i,m|G,\beta,\delta)$.
	
	We also associate to the multistate promoter process a process $X(t)$ taking values in $\Delta_\X$ which is a solution to 
	$$\frac d{dt}X_i(t)=\delta\big[\mathbbm 1(E(t)=i)-X_i(t)\big].$$
It is known that the joint process $(E(t), M(t), X(t))$ has a unique stationary distribution (cf. Corollary 3.5 \cite{Herbach}). In particular, we denote the stationary distribution of $(E(t),M(t))$ by $\pi_1(i,m|G,\beta,\delta)$.
	
\end{defi}

	The multistate promoter process models mRNA production by a gene promoter which can be in one of a finite collection $\X$ of states. The Markov jump process $E(t)$ with rates $G$ tracks the state of the promoter over time.   The rate of mRNA production while the promoter is in state $i$ is given by $\beta_i>0$. Then, the production of mRNA is a birth-death process, with mRNA produced at rate $\beta_i$ when $E(t)=i$, while each individual mRNA degrades independently at rate $\delta>0$.

\medskip
We now state three results on the these processes and deduce the stick-breaking representation of the multistate mRNA promoter process in Theorem \ref{cormultistatestationaryconstruction}.

The first result is that the empirical measure of the time-inhomogeneous MC converges weakly to the MSBM stick-breaking measure.  A different characterization for types of $G$ may also be found in \cite{DS}.

\begin{theorem}[cf. Theorem 2.13 \cite{DLS}]
	Let $G$ be a bounded, irreducible generator matrix, with stationary distribution $\mu$, over a discrete space $\X$. Let $\M$ be the inhomogeneous chain associated to $G$, and $(\nu^n)_{n\ge1}$ be the empirical measures of $\M$. Then
	$$(M_n,\nu^n)\xrightarrow{\ d\ }\text{\emph{MSBMI}}(G^*).$$
\end{theorem}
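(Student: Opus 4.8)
\emph{The plan} is to read the inhomogeneous chain $\M$ \emph{backward} from time $n$, decompose $\nu^n$ into the atoms contributed by its successive sojourns, and match the resulting weights and states against the clumped representation of $\text{MSBMI}(G^*)$ furnished by Proposition \ref{propaltMSBM}. Write $\lambda_i=-G_{i,i}$, so that at step $k>N$ the chain holds at state $i$ with probability $1-\lambda_i/k$, and the long sojourns become the ``clumps''. Let $n=\tau_0>\tau_1>\tau_2>\cdots$ denote the jump times of $\M$ counted backward from $n$, and let $S_1=M_n,\,S_2,\,S_3,\ldots$ be the corresponding sojourn states. Then
\[
\nu^n \;=\; \sum_{k\ge1}\frac{\tau_{k-1}-\tau_k}{n}\,\delta_{S_k},
\]
and the theorem reduces to the joint convergence of the state sequence $(S_k)_{k\ge1}$ together with the normalized sojourn lengths.

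\emph{Masses.} The central computation is that, for $s\in(0,1)$, the probability of no jump on $(sn,n]$ from state $i$ is $\prod_{sn<k\le n}(1-\lambda_i/k)\to s^{\lambda_i}$; hence $\tau_1/n\Rightarrow U_1$ with $\P(U_1\le s)=s^{\lambda_i}$ on $\{S_1=i\}$, i.e. $U_1\sim\text{Beta}(\lambda_i,1)$, and the top clump mass $1-\tau_1/n\Rightarrow Z_1\sim\text{Beta}(1,\lambda_{S_1})$. By the approximate self-similarity of the dynamics --- restarting the backward problem at $\tau_1\approx U_1 n$ reproduces the same problem with $n$ replaced by $\tau_1$ --- the successive ratios $\tau_k/\tau_{k-1}$ converge to independent $\text{Beta}(\lambda_{S_k},1)$ factors, so that the $k$th clump mass converges to $\prod_{j<k}(1-Z_j)\,Z_k$ with $Z_j\sim\text{Beta}(1,\lambda_{S_j})$. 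This is exactly the residual allocation model $\{R_k\}$ of Proposition \ref{propaltMSBM}.

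\emph{States and reversal.} Marginally $M_n\Rightarrow\mu$ by irreducibility, giving $S_1\sim\mu$. The appearance of the \emph{adjoint} $G^*$ comes from time reversal of the embedded jump chain: its forward kernel is $K_{i,j}=G_{i,j}/\lambda_i$ for $i\ne j$, with reversing measure proportional to $\mu_i\lambda_i$, and a direct calculation using $(G^*)_{i,j}=\mu_j G_{j,i}/\mu_i$ and $(G^*)_{i,i}=G_{i,i}$ shows the reversed kernel equals $K^*_{i,j}=(G^*)_{i,j}/(-(G^*)_{i,i})$. Thus the backward state sequence $(S_k)_{k\ge1}$ converges to the Markov chain driving the clumped construction for $G^*$, with $Z_k\sim\text{Beta}(1,-(G^*)_{S_k,S_k})$, precisely matching $\text{MSBMI}(G^*)$.

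\emph{The main obstacle} is to upgrade these marginal and finite-stage statements into joint weak convergence of the full random measure. This requires, first, asymptotic conditional independence of the state sequence and the Beta-distributed mass ratios, and second, tightness, namely that the aggregate mass of the old, deeply buried clumps is negligible; the latter is controlled by the geometric decay of $\prod_{j\le k}(1-Z_j)$ together with the truncation tail bound \eqref{truncation error}. Making the approximate self-similarity rigorous --- verifying that the $\lambda_i/k$ jump probabilities produce, in the $1/n$ scaling limit, a genuine multiplicative GEM cascade --- is the technical heart, after which the clumped representation of Proposition \ref{propaltMSBM} identifies the limit as $\text{MSBMI}(G^*)$.
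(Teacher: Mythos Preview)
The paper does not supply its own proof of this theorem: it is quoted verbatim as a known result from \cite{DLS} (Theorem 2.13 there), and the present article simply invokes it as one of three ingredients assembled into Theorem~\ref{cormultistatestationaryconstruction}. So there is no in-paper argument to compare against directly.

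That said, your outline is the right one, and it is visibly the argument underlying the cited source. The tell is that the paper attributes \emph{both} this convergence theorem and the clumped representation (Proposition~\ref{propaltMSBM}) to the same Theorem 2.13 of \cite{DLS}; evidently the two are proved together there, exactly as you propose --- read the chain backward, identify sojourn masses via $\prod_{sn<k\le n}(1-\lambda_i/k)\to s^{\lambda_i}$ to get the Beta$(1,\lambda_{S_j})$ sticks, and pick up the adjoint $G^*$ from time-reversal of the embedded jump chain (your computation $\hat K_{i,j}=\mu_jG_{j,i}/(\mu_i\lambda_i)=(G^*)_{i,j}/(-(G^*)_{i,i})$ is correct). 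The self-similarity heuristic for the successive ratios $\tau_k/\tau_{k-1}$ is the right intuition; the rigorous version in \cite{DLS} goes through the joint law of the last few backward jump times and states. Your identification of the remaining technical work --- joint convergence and negligibility of deep clumps --- is accurate, and the tail control you point to is indeed how it is handled. In short: correct plan, and essentially the same route as the cited proof.
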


The second result is that the stationary distribution of the PMDP is the limit empirical measure for the time-inhomogeneous MC.  In \cite{BC}, one may also find a non stick-breaking characterization of the limit, as well as other interesting results.

\begin{theorem}[cf. Theorem 2.8 \cite{BC}]
Let $G$ be an irreducible generator matrix over a finite space $\X$. Let $\M$ be the inhomogeneous chain assocaited to $G$, and $(\nu^n)_{n\ge1}$ be the empirical measures of $\M$. Let also $(E(t),X(t))$ be an exponential zig-zag process parametrized by $G$. Then, the associated stationary distribution $(E,X)$ is the limit distribution of the time-inhomogeneous Markov chain:
$$(E,X)
\stackrel d= \lim_{n\rightarrow\infty}(M_n,\nu^n).$$
\end{theorem}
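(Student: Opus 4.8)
The plan is to treat the pair $(M_n,\nu^n)$ as a time-inhomogeneous discrete-time Markov chain on the compact space $\X\times\Delta_\X$ and to show that its (necessarily existing) limiting law is invariant for the exponential zig-zag generator $\mathcal L_Z$; uniqueness of the zig-zag stationary distribution then forces the identification. First I would record the one-step dynamics. Writing $\nu^{n+1}=\nu^n+\frac1{n+1}(\delta_{M_{n+1}}-\nu^n)$ and using the kernel $K_{n+1}=I+\frac{G}{n+1}$ (valid for $n$ large), a second-order Taylor expansion in the $x$-variable gives, for any $f(i,x)$ with $f(i,\cdot)\in C^2(\Delta_\X)$,
\[
\E{f(M_{n+1},\nu^{n+1})\mid M_n,\nu^n}-f(M_n,\nu^n)=\frac{1}{n+1}\,\mathcal L_Z f(M_n,\nu^n)+R_n,
\]
where, after using $G_{i,i}=-\sum_{j\neq i}G_{i,j}$ to rewrite $\sum_j G_{i,j}f(j,x)=\sum_{j\neq i}G_{i,j}[f(j,x)-f(i,x)]$, the leading term is exactly the zig-zag generator, and $|R_n|\le C_f/(n+1)^2$ uniformly by compactness of $\Delta_\X$ and boundedness of the derivatives of $f$ (finite $\X$). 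This computation is the crux and makes precise the heuristic that, under the time change $t=\log n$, the increment $\frac1{n+1}$ plays the role of $dt$ and $(M_n,\nu^n)$ runs the zig-zag dynamics.

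Next I would pass to the limit. By the preceding theorem (cf.\ Theorem 2.13 of \cite{DLS}) the sequence $(M_n,\nu^n)$ converges in distribution to a limit $\pi_\infty$; since $\X$ is finite and $\Delta_\X$ is compact, $\mathcal L_Z f$ is bounded and continuous, so $b_n:=\E{\mathcal L_Z f(M_n,\nu^n)}\to\int \mathcal L_Z f\,d\pi_\infty=:L$. Taking expectations in the display above and telescoping from $1$ to $N$,
\[
\E{f(M_N,\nu^N)}-\E{f(M_1,\nu^1)}=\sum_{n=1}^{N-1}\frac{b_n}{n+1}+\sum_{n=1}^{N-1}\E{R_n},
\]
where $\sum_n\E{R_n}$ is absolutely convergent since $|\E{R_n}|\le C_f/(n+1)^2$. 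The left side is bounded (as $f$ is bounded), whereas $\sum_{n=1}^{N-1}\frac{b_n}{n+1}=L\log N+o(\log N)$ because $b_n\to L$. Hence $L=0$; that is, $\int\mathcal L_Z f\,d\pi_\infty=0$ for every such $f$.

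Finally, since the functions $f(i,x)$ with $f(i,\cdot)$ smooth form a core for the generator of this PDMP on the compact space $\X\times\Delta_\X$ (smooth flow, bounded jump rates; cf.\ \cite{Davis} and Section 3 of \cite{BC}), the relation $\int\mathcal L_Z f\,d\pi_\infty=0$ identifies $\pi_\infty$ as an invariant distribution of the exponential zig-zag process. As that process has a unique stationary distribution, $\pi_\infty$ is the law of $(E,X)$, which is the assertion.

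I expect the main obstacle to be the limit-identification step: one must know that $b_n$ genuinely converges (not merely that its partial sums are controlled), so that the weights $\frac1{n+1}$—whose sum diverges logarithmically—force $L=0$. This is precisely where the existence of the full limiting law from the previous theorem is invoked, rather than only the subsequential tightness that compactness alone would give. A secondary point requiring care is the uniform $O(n^{-2})$ control of the remainder $R_n$ together with the verification that the smooth test functions constitute a measure-determining core for $\mathcal L_Z$, so that the vanishing of $\int \mathcal L_Z f\,d\pi_\infty$ truly yields invariance.
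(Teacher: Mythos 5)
Your argument is essentially sound, and it is worth noting at the outset that the paper offers no proof of this statement: it is quoted as an external result (Theorem 2.8 of \cite{BC}), so the only comparison available is with the source. Your generator computation is correct: with $\nu^{n+1}=\nu^n+\tfrac{1}{n+1}(\delta_{M_{n+1}}-\nu^n)$ and kernel $I+G/(n+1)$, the first-order terms assemble exactly into $\mathcal L_Z f$ after using $G_{i,i}=-\sum_{j\neq i}G_{i,j}$, the cross terms and the second-order Taylor remainder are uniformly $O((n+1)^{-2})$ by compactness of $\Delta_\X$ and finiteness of $\X$, and the finitely many indices where the kernel is not yet of the form $I+G/n$ are harmless in the telescoping. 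The telescoping step correctly exploits the logarithmic divergence of $\sum 1/(n+1)$ to force $\int\mathcal L_Zf\,d\pi_\infty=0$, and you are right that this needs genuine convergence of $b_n$ --- hence the full distributional limit supplied by the preceding theorem --- rather than mere subsequential tightness. The one step you should not leave as a closing remark is the passage from $\int\mathcal L_Zf\,d\pi_\infty=0$ on smooth test functions to invariance of $\pi_\infty$: this is an Echeverr\'ia-type criterion and genuinely requires that such functions form a core (equivalently, well-posedness of the martingale problem for $\mathcal L_Z$ on that domain). Here it does hold, because the flow $\varphi^i_t(x)=e_i+e^{-t}(x-e_i)$ is smooth and maps $\Delta_\X$ into itself and the jump rates are bounded and independent of $x$, so the semigroup preserves smoothness; but a complete write-up must verify this rather than flag it. Structurally your route differs from \cite{BC}: there the time-changed chain (with $t=\log n$) is shown to track the zig-zag dynamics directly, a quantitative coupling/asymptotic-pseudotrajectory style argument that establishes convergence on its own and yields rates, whereas you use the preceding theorem solely for existence of the limit and then identify it via the stationarity criterion plus uniqueness of the zig-zag invariant law. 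Your version is shorter and logically clean --- there is no circularity, since the limit of $(M_n,\nu^n)$ is established in \cite{DLS} independently of the zig-zag process --- at the cost of being non-quantitative and not self-contained.
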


The third result finds that the stationary distribution of the multistate promoter process is a certain Poisson mixture. In \cite{Herbach}, generating functions of the stationary distribution are also given.

\begin{theorem}[cf. Proposition 4.1 \cite{Herbach}]\label{marginalmultistate}
	Let $G$ be an irreducible generator matrix of a finite space $\X$. Let $\beta\in(\mathbb R^+)^\X$ and $\delta,\ \lambda>0$. Let $(E(t),M(t))$ be a multistate promoter process parametrized by $G$ with associated process $X(t)$. Suppose $M(0)|E(0),X(0)\sim$ \emph{Poisson}$(\lambda(0))$ where $\lambda(t)$ satisfies 
	$$\lambda(t)=\delta^{-1}\beta\cdot X(t).$$
	Then,
	$$M(t)\Big|\big(E(\tau))_{\tau\ge 0}\ \sim\ \text{\emph{Poisson}}(\lambda(t))\hspace{1cm} \text{where}\hspace{1cm}\partial_t\lambda(t)=\beta_{E(t)}-\delta\lambda(t).$$
	Further,
	if $(E,M,X)$ is an observation from the stationary distribution of $(E(t), M(t), X(t))$, then
	$$M\Big| E,X\ \sim\ \text{\emph{Poisson}}(\delta^{-1}\beta\cdot X).$$
	
\end{theorem}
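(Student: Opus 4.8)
The plan is to establish the dynamic statement first—that conditionally on the promoter trajectory $(E(\tau))_{\tau\ge0}$ the mRNA count $M(t)$ is Poisson with a deterministic, trajectory-dependent parameter $\lambda(t)$—and then pass to stationarity by a standard equilibrium argument. The key observation is that, once we condition on the entire path of $E$, the process $M(t)$ becomes a \emph{time-inhomogeneous} linear birth--death process: births occur at the (now deterministic) rate $\beta_{E(t)}$ and each particle dies independently at rate $\delta$. I would exploit the fact that such a process preserves the Poisson family. Concretely, I would write the forward (master) equation for the conditional probabilities $p_m(t)=\mathbf P\big(M(t)=m\mid (E(\tau))_{\tau\ge0}\big)$, namely
\begin{equation}\label{eq:master}
\partial_t p_m(t)=\beta_{E(t)}\big(p_{m-1}(t)-p_m(t)\big)+\delta\big((m+1)p_{m+1}(t)-m\,p_m(t)\big),
\end{equation}
and verify that the ansatz $p_m(t)=e^{-\lambda(t)}\lambda(t)^m/m!$ solves it precisely when $\lambda$ obeys the scalar ODE $\partial_t\lambda(t)=\beta_{E(t)}-\delta\lambda(t)$. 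This is a short computation: substituting the Poisson form into \eqref{eq:master} and matching coefficients of $\lambda^m/m!$ collapses both sides to the stated ODE. Since the initial law $M(0)\mid E(0),X(0)$ is assumed Poisson$(\lambda(0))$ with $\lambda(0)=\delta^{-1}\beta\cdot X(0)$, and the solution of a linear ODE is unique, the Poisson form propagates for all $t>0$. A cleaner alternative I would consider is the probability generating function $\phi(s,t)=\mathbf E\big[s^{M(t)}\mid (E(\tau))_{\tau\ge0}\big]$, which satisfies the linear first-order PDE $\partial_t\phi=(s-1)\big(\beta_{E(t)}\phi-\delta\,\partial_s\phi\big)$; solving by characteristics yields $\phi(s,t)=\exp\big((s-1)\lambda(t)\big)$, confirming the Poisson conclusion directly.

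Next I would reconcile the two descriptions of the parameter, $\lambda(t)=\delta^{-1}\beta\cdot X(t)$ versus the ODE $\partial_t\lambda=\beta_{E(t)}-\delta\lambda$. By Definition \ref{defnmultistate}, the auxiliary process $X(t)$ satisfies $\frac{d}{dt}X_i(t)=\delta\big[\mathbbm 1(E(t)=i)-X_i(t)\big]$; dotting this with $\beta$ and dividing by $\delta$ gives $\frac{d}{dt}\big(\delta^{-1}\beta\cdot X(t)\big)=\beta\cdot\mathbbm 1(E(t)=\,\cdot\,)-\delta\big(\delta^{-1}\beta\cdot X(t)\big)=\beta_{E(t)}-\delta\big(\delta^{-1}\beta\cdot X\big)$. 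Hence $t\mapsto\delta^{-1}\beta\cdot X(t)$ solves exactly the same ODE as $\lambda(t)$, and since the two agree at $t=0$ by hypothesis, they coincide for all $t$. This identifies the abstract integrating factor in the dynamic statement with the concrete linear functional of $X$ driving the Poisson mean.

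Finally, for the stationary statement I would observe that $(E(t),M(t),X(t))$ is a well-posed Markov process with a unique stationary distribution (cited in Definition \ref{defnmultistate} via Corollary 3.5 of \cite{Herbach}). Starting the triple from stationarity and conditioning on $(E,X)$ at a fixed time, the conditional law of $M$ is read off from the dynamic result: because $\lambda(t)=\delta^{-1}\beta\cdot X(t)$ tracks $X$ pointwise, the stationary conditional mean is $\delta^{-1}\beta\cdot X$, giving $M\mid E,X\sim\text{Poisson}(\delta^{-1}\beta\cdot X)$. I expect the main obstacle to be not the algebra—the Poisson-preservation computation is routine—but the measure-theoretic care needed to condition on the \emph{entire} infinite promoter trajectory $(E(\tau))_{\tau\ge0}$ and to justify that the resulting $\lambda(t)$ is a genuine (almost surely finite) deterministic functional of that path, together with verifying that the stationary expectation interchange is legitimate. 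I would handle this by noting that $E$ has finitely many jumps on any bounded interval almost surely, so $\lambda$ is a piecewise-smooth solution of its ODE along each realized trajectory, and that integrability of $\delta^{-1}\beta\cdot X$ (bounded since $X\in\Delta_\X$ and $\X$ is finite) makes the conditioning and the stationary identity unambiguous.
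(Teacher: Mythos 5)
The paper does not supply its own proof of this theorem: it is quoted as Proposition 4.1 of \cite{Herbach}, the underlying mechanism being the Poisson representation of Gardiner--Chaturvedi mentioned in the introduction. Your derivation is a correct, self-contained reconstruction of that cited result. Conditioning on the promoter path (legitimate here because $E$ is autonomous, there being no feedback from $M$) reduces $M(t)$ to a time-inhomogeneous immigration--death process; both your master-equation substitution and your generating-function PDE correctly collapse to $\partial_t\lambda=\beta_{E(t)}-\delta\lambda$; and the reconciliation with $\lambda(t)=\delta^{-1}\beta\cdot X(t)$ via the ODE for $X$ in Definition \ref{defnmultistate} is exactly right, since $\frac{d}{dt}(\delta^{-1}\beta\cdot X)=\beta_{E(t)}-\beta\cdot X$.

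The one step you should tighten is the passage to stationarity. As written, ``starting the triple from stationarity and reading off the conditional law of $M$ from the dynamic result'' is circular: the dynamic result applies only under the hypothesis that $M(0)\mid E(0),X(0)$ is already Poisson$(\delta^{-1}\beta\cdot X(0))$, which is precisely the property of the stationary law you are trying to establish. The standard repair is to start the process from an arbitrary initial law satisfying the Poisson condition (e.g.\ $X(0)$ arbitrary and $M(0)$ conditionally Poisson with the matching mean), observe that your dynamic result shows the class of such laws is invariant under the semigroup and is closed under weak limits (the map $x\mapsto\mathrm{Poisson}(\delta^{-1}\beta\cdot x)$ being weakly continuous on the compact set $\Delta_\X$), and then invoke uniqueness of the stationary distribution together with convergence to it (or a Krylov--Bogolyubov averaging argument) to conclude that the stationary law lies in this class. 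This is consistent with the remark the paper makes immediately after the theorem, namely that the Poisson-mixture representation is acquired after a finite random burn-in regardless of the initial law of $M(0)$.
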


\medskip

We remark that the stationary distribution of $(E(t),M(t))$ does not depend on the initial distribution of $M(0)$, and indeed, the representation of $M(t)$ as a Poisson mixture is retained after a finite, random burnoff period; see \cite{Lin-Buchler}.

We now combine the three previous results to find a stick-breaking representation of the stationary distribution $\pi_1(i,m|G,\beta,\delta)$, that is of the limit $(E,M,X)$.

First, by scaling time by $\delta$, we see that the stationary limit of the multistate process components $(E,X)$ is the limit of the PDMP in the work of \cite{BC} with generator $G/\delta$.  In turn, the work of \cite{DLS} shows that this limit is MSBMI$(G^*/\delta)$.  Hence, we obtain the main statement of this section, namely the following theorem.

\begin{theorem}\label{cormultistatestationaryconstruction}
	Let $G$ be an irreducible generator matrix over a finite space $\X$. Let $\beta\in(\mathbb R^+)^\X$ and $\delta>0$. Let $(E(t),M(t))$ be a multistate promoter process parametrized by $G$ with associated process $X(t)$. Then,
	$$\big(E(t),M(t),X(t)\big)\xrightarrow d (E,M,X)$$
	where 
	$$(E,X)\sim\text{\emph{MSBMI}}(G^*/\delta)\hspace{1cm}\text{ and }\hspace{1cm}M\Big| E,X\ \sim\ \text{\emph{Poisson}}(\delta^{-1}\beta\cdot X).$$
	Hence, the stationary distribution $\pi_1(i,m|G,\beta, \delta)$ of $(E,M)$ is the law of the mixture 
	${\rm Poisson}(\delta^{-1}\beta \cdot X)$.
\end{theorem}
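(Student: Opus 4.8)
The plan is to combine the three results stated above after one structural reduction: recognizing that the $M$-marginalized dynamics $(E(t),X(t))$ is, up to a deterministic time change, an exponential zig-zag process. First I would note that, by Definition \ref{defnmultistate} (citing Corollary 3.5 of \cite{Herbach}), the joint process $(E(t),M(t),X(t))$ has a unique stationary law and is ergodic, so $(E(t),M(t),X(t))\xrightarrow{d}(E,M,X)$ to that law; the real content is therefore the \emph{identification} of the limit. Since the promoter switches of $E$ occur at rates $G_{i,j}$ independently of the mRNA count $m$, the process $E(t)$ is on its own a Markov jump process with generator $G$, and $X(t)$ is a deterministic functional of the $E$-path solving $\dot X_i=\delta[\mathbbm 1(E=i)-X_i]$. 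Hence $(E(t),X(t))$ is an autonomous PDMP, and I may treat the conditional law of $M$ and the marginal law of $(E,X)$ separately.

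For the conditional law I would apply Theorem \ref{marginalmultistate} at stationarity: its final display gives $M\mid E,X\sim\text{Poisson}(\delta^{-1}\beta\cdot X)$, which is precisely the asserted mixture. It then remains only to show $(E,X)\sim\text{MSBMI}(G^*/\delta)$.

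For the marginal law I would rescale time. Writing $s=\delta t$ and $(\tilde E(s),\tilde X(s))=(E(s/\delta),X(s/\delta))$, the chain rule turns the flow into $d\tilde X/ds=e_{\tilde E}-\tilde X$ while the jump rates of $\tilde E$ become $G/\delta$; comparing with the exponential zig-zag generator $\mathcal L_Z$ shows that $(\tilde E,\tilde X)$ is exactly an exponential zig-zag process parametrized by $G/\delta$. Because a deterministic time change leaves the stationary distribution unchanged, the stationary law of $(E,X)$ equals that of this zig-zag. Now the stationarity result cited from \cite{BC} (Theorem 2.8) identifies the zig-zag stationary law with $\lim_n(M_n,\nu^n)$ for the inhomogeneous chain built from $G/\delta$, and the empirical-measure result cited from \cite{DLS} (Theorem 2.13) identifies that limit as $\text{MSBMI}\big((G/\delta)^*\big)$. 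Finally, since positive scaling fixes the stationary vector $\mu$ and the adjoint $A^*=D(\mu)^{-1}A^TD(\mu)$ is linear, one has $(G/\delta)^*=G^*/\delta$, yielding $(E,X)\sim\text{MSBMI}(G^*/\delta)$ and, with the previous paragraph, the full statement.

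I expect the main obstacle to be the time-rescaling step. One must verify carefully that the $M$-marginalized pair $(E,X)$ is genuinely Markov and that after rescaling its generator matches $\mathcal L_Z$ \emph{exactly} — in particular that the factor $\delta$ migrates cleanly out of the flow and into the jump rates so that $G\mapsto G/\delta$ — and that the finite-state irreducibility and boundedness hypotheses under which \cite{BC} and \cite{DLS} operate transfer verbatim to $G/\delta$ (they do, since $\delta>0$ preserves irreducibility and boundedness). The remaining points, namely invariance of stationarity under a deterministic time change and the adjoint identity $(G/\delta)^*=G^*/\delta$, are routine.
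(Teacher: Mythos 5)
Your proposal is correct and follows essentially the same route as the paper: the authors likewise rescale time by $\delta$ to identify the $(E,X)$ marginal with the exponential zig-zag process of \cite{BC} with generator $G/\delta$, invoke Theorem 2.8 of \cite{BC} and Theorem 2.13 of \cite{DLS} to get $\text{MSBMI}(G^*/\delta)$, and attach the conditional Poisson law from Theorem \ref{marginalmultistate}. Your added care about the autonomy of $(E,X)$, the invariance of the stationary law under the time change, and the identity $(G/\delta)^*=G^*/\delta$ simply makes explicit what the paper leaves implicit.
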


\subsection{Moments with respect to the multistate mRNA promoter process}
\label{moments_subsect}

Using the stick-breaking apparatus we may identify moments of interest.  We first state a result found in \cite{LS}.

\begin{prop}[cf. Theorem 4 \cite{LS}]
	Let $(T,\nu)\sim$ \emph{MSBMI}$(G)$ for an irreducible, bounded generator matrix $G$ with stationary distribution on a discrete space $\X$. Let $n\in\N$, $\vec k\in\N_0^n$, and $(A_j)_{j=1}^n$ be disjoint collection of subsets of $\X$. Define $\mathbb{S}(\vec k)$ to be the collection of all distinct $k$-lists consisting of $k_1$ many 1's, $k_2$ many 2's, and so on to $k_n$ many $n$'s, where $k=\sum_{j=1}^nk_j$. Then,
	$$\left(\E{\prod_{j=1}^n \nu(A_j)^{k_j}\Bigg|T=x}\right)_{x\in\X}=\left(\#\mathbb{S}(\vec k)\right)^{-1}\sum_{\sigma\in\mathbb{S}(\vec k)}\left[\prod_{j=1}^{k;(R)}(I-G/j)^{-1}D(A_{\sigma_j})\right]\vec 1.$$
\end{prop}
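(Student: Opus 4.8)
The plan is to compute the mixed moment directly from the stick-breaking representation $\nu=\sum_{j\ge1}P_j\delta_{T_j}$ of Definition \ref{defnmsbm}, exploiting the independence of the GEM weights $\P$ from the Markov locations $\T$. First I would write $\nu(A_j)=\sum_{\ell\ge1}P_\ell\mathbbm 1_{A_j}(T_\ell)$ and expand the product $\prod_{j=1}^n\nu(A_j)^{k_j}=\prod_{s=1}^k\nu(A_{c(s)})$ over the $k=\sum_jk_j$ factors, where $c$ is a fixed labelling of the $k$ factors by the sets with the prescribed multiplicities. Distributing the sums yields a sum over index maps $\ell:\{1,\dots,k\}\to\N$, and conditioning on $T_1=x$ factorizes each summand as $\E{\prod_s P_{\ell(s)}}\cdot\P\big(T_{\ell(s)}\in A_{c(s)}\ \forall s\mid T_1=x\big)$ by independence of $\P$ and $\T$.

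The combinatorial heart is to organize this sum by the relative order of the distinct values taken by $\ell$. Since the $A_j$ are disjoint, any summand in which two factors carrying different labels share an index vanishes; the surviving configurations are governed by assigning the $k$ factors to a strictly increasing list of indices $v_1<\cdots<v_k$ together with a labelling of those positions, and the distinct labellings are exactly the multiset permutations $\sigma\in\mathbb S(\vec k)$ (this is where the normalization $\#\mathbb S(\vec k)^{-1}$, together with the $\prod_jk_j!$ interchangeable-factor multiplicities, enters, via $\#\mathbb S(\vec k)=k!/\prod_jk_j!$). For a fixed $\sigma$ the location factor is, by the Markov property, an ordered alternating product of powers of $Q$ and the projections $D(A_{\sigma_t})$ applied to $\vec1$, while the GEM factor $\E{\prod_tP_{v_t}}$ is an explicit product of Beta$(1,\theta)$ moments $\E{Z(1-Z)^{m}}$ and $\E{(1-Z)^{m}}$ whose exponents $m$ equal the number of indices among $v_1,\dots,v_k$ still lying strictly ahead. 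Summing the resulting geometric series over the gaps $v_{t+1}-v_t$ collapses each gap into a resolvent via the identity $(I-\tfrac{\theta}{\theta+j}Q)^{-1}=\tfrac{\theta+j}{j}(I-G/j)^{-1}$, which holds because $I-\tfrac{\theta}{\theta+j}Q=\tfrac1{\theta+j}(jI-G)$ (and $I-G/j$ is invertible since $G$ has spectrum in $\{\operatorname{Re}\le0\}$ with $G\vec1=0$); the level $j$ is precisely the count of indices not yet passed, so the levels run through $k,k-1,\dots,1$ and reproduce the reverse-ordered product $\prod_{j=1}^{k;(R)}(I-G/j)^{-1}D(A_{\sigma_j})$.

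I expect the main obstacle to be reconciling the bookkeeping that this resummation throws off. Concretely, the naive resummation produces, for each ordering, $\theta$-dependent scalar prefactors (from the Beta moments and the factors $\tfrac{\theta+j}{j}$) together with residual transition factors $Q=I+G/\theta$ arising from the single step that enters each chosen index; these must be shown to cancel against one another --- using $G\vec1=0$, equivalently $Q\vec1=\vec1$ --- so that the surviving expression is the stated $\theta$-free product with normalization $\#\mathbb S(\vec k)^{-1}$. This total $\theta$-independence is guaranteed a priori, since MSBM$(G)$ does not depend on the decomposition $G=\theta(Q-I)$, but exhibiting it at the level of the matrix identity (rather than just numerically) is the delicate computation.

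A cleaner route to the same end, which I would likely carry out to avoid the manipulation above, is induction on $k$ via the self-similarity $\nu\stackrel d= Z\delta_{T_1}+(1-Z)\nu'$ conditional on $T_1$, where $Z\sim$ Beta$(1,\theta)$ and $\nu'$ is an independent MSBM$(G)$ started from $T_2\sim Q(T_1,\cdot)$. Writing $\phi(\vec k)=\big(\E{\prod_j\nu(A_j)^{k_j}\mid T_1=x}\big)_{x\in\X}$, expanding the binomials (using that $x$ lies in at most one $A_{j_0}$), and solving the resulting linear relation for $\phi(\vec k)$ gives the recursion
\[
\phi(\vec k)=\tfrac{\theta+k}{k}(I-G/k)^{-1}\sum_{j_0}D(A_{j_0})\sum_{a=1}^{k_{j_0}}\binom{k_{j_0}}{a}\E{Z^a(1-Z)^{k-a}}\,Q\,\phi(\vec k-ae_{j_0}),
\]
whose outer factor is exactly the level-$k$ resolvent $(I-G/k)^{-1}$ appearing leftmost in the claimed formula. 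The induction then peels one resolvent at a time; the terms with $a\ge2$ (several factors landing on a single index) together with the explicit $\theta$- and $Q$-dependence are again the delicate part, and verifying that they assemble into the symmetrized resolvent product with weight $\#\mathbb S(\vec k)^{-1}$ is where the real work lies.
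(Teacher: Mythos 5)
The paper itself contains no proof of this proposition: it is imported verbatim as Theorem 4 of \cite{LS}, so there is nothing internal to compare against. Judged on its own terms, your second (recursive) strategy is the right skeleton and is essentially how such moment formulas are obtained for stick-breaking measures: condition on $(Z_1,T_1,T_2)$, use the self-similarity $\nu\stackrel d= Z_1\delta_{T_1}+(1-Z_1)\nu'$ with $\nu'$ an independent copy started from $T_2\sim Q(T_1,\cdot)$, and invert $I-\tfrac{\theta}{\theta+k}Q=\tfrac k{\theta+k}(I-G/k)$. Your displayed recursion for $\phi(\vec k)$ is correct, and the induction target it must be reduced to is the clean identity
$$\phi(\vec k)=\frac1k\,(I-G/k)^{-1}\sum_{j}k_j\,D(A_j)\,\phi(\vec k-e_j),$$
which is equivalent to the stated symmetrized product via the count $\#\mathbb S(\vec k-e_j)=\tfrac{k_j}{k}\#\mathbb S(\vec k)$.

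The gap is that the proposal stops exactly where the content of the theorem lies, and you say so yourself: nothing is offered to show that the $a\ge2$ binomial terms, the residual factor $Q$, and the $\theta$-dependent Beta moments collapse into the display above. This is not routine bookkeeping. The mechanism that makes it work is the operator identity $\theta Q=(\theta+j)I-j(I-G/j)$ combined with $D(A_j)^2=D(A_j)$ and $D(A_i)D(A_j)=0$ for $i\ne j$. For instance, with $\vec k=2e_1$ one has $2\,\E{Z(1-Z)}D(A_1)Q\phi(e_1)+\E{Z^2}D(A_1)\vec 1=\tfrac2{\theta+2}D(A_1)(I-G)^{-1}D(A_1)\vec 1$ only because the leftover term $\tfrac{2}{(\theta+2)(\theta+1)}D(A_1)\left(\vec 1-D(A_1)\vec 1\right)$, produced jointly by the $a=2$ contribution and the $-I$ part of $\theta Q(I-G)^{-1}=(\theta+1)(I-G)^{-1}-I$, vanishes by idempotence. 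Until this cancellation is exhibited for general $\vec k$ --- most naturally by an inner induction that repeatedly substitutes the level-$(k-1)$ recursion for $Q\phi(\vec k-ae_j)$ --- the argument is a plausible plan rather than a proof. Your first (direct-expansion) route has the same status: the resummation over gaps and the disposal of coincident indices are asserted, not performed.
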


We may rewrite the above expression in more convenient form.
\begin{cor}\label{cormomentstar}
	Consider the context of the previous proposition.
	 If $(\tilde T,\tilde\nu)\sim$ \emph{MSBMI}$(G^*)$, then
	$$\E{\prod_{j=1}^n \tilde\nu(A_j)^{k_j}\Bigg|\tilde T=x}=\mu_x^{-1}\cdot\left(\#\mathbb{S}(\vec k)\right)^{-1}\sum_{\sigma\in\mathbb{S}(\vec k)}\mu^T\left[\prod_{j=1}^kD(A_{\sigma_j})(I-G/j)^{-1}\right]e_x.$$
\end{cor}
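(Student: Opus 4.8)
The plan is to derive the $G^*$-version of the moment formula from the $G$-version in the preceding proposition by exploiting the definition of the adjoint $A^* = D(\mu)^{-1}A^T D(\mu)$ together with the observation that $(I-G^*/j)^{-1} = \big((I-G/j)^{-1}\big)^*$, and then to rewrite the resulting matrix product in a cleaner left-multiplication-by-$\mu^T$ form. First I would apply the previous proposition directly with $G$ replaced by $G^*$. Since $G^*$ is again an irreducible bounded generator with the same stationary distribution $\mu$, the proposition yields
\begin{equation*}
\E{\prod_{j=1}^n \tilde\nu(A_j)^{k_j}\Bigg|\tilde T=x}
=\left(\#\mathbb{S}(\vec k)\right)^{-1}\sum_{\sigma\in\mathbb{S}(\vec k)}e_x^T\left[\prod_{j=1}^{k;(R)}(I-G^*/j)^{-1}D(A_{\sigma_j})\right]\vec 1,
\end{equation*}
where I have extracted the $x$-th coordinate by left-multiplying the column vector by $e_x^T$.

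The key algebraic step is to convert the adjoint reverse-order product into a forward-order product conjugated by $D(\mu)$. Using $(I-G^*/j)^{-1} = D(\mu)^{-1}(I-G/j)^{-T}D(\mu)$ and the fact that the diagonal matrices $D(A_{\sigma_j})$ are symmetric and commute past $D(\mu)$ only up to their own diagonal structure, I would take the transpose of the entire scalar expression. Since the quantity is a scalar, it equals its own transpose; transposing reverses the product order, turning the reverse-order product $\prod^{;(R)}$ into a forward-order product $\prod$ and swapping $e_x^T[\cdots]\vec 1$ into $\vec 1^T[\cdots]^T e_x$. After transposing and substituting the adjoint formula, the internal $D(\mu)^{-1}$ and $D(\mu)$ factors telescope between adjacent inverse factors, leaving a single $D(\mu)$ on the left acting on $\vec 1^T$ (which gives $\mu^T$) and a single $D(\mu)^{-1}$ on the right acting on $e_x$ (which produces the prefactor $\mu_x^{-1}$). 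This is precisely the mechanism that moves the Markov chain from the $G^*$-dynamics back to the $G$-dynamics at the cost of a reweighting by $\mu$.

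I expect the main obstacle to be bookkeeping the telescoping of the $D(\mu)^{\pm 1}$ factors across the product while correctly tracking how the diagonal matrices $D(A_{\sigma_j})$ interleave, and in particular verifying that the endpoints produce exactly $\mu^T$ on the left and $\mu_x^{-1}e_x$ on the right rather than some shifted placement. Care is also needed because the empty-product and indexing conventions at $j=1$ and $j=k$ must be checked so that no stray $D(\mu)$ factor is left uncanceled. Once the telescoping is confirmed, the sum over $\sigma\in\mathbb{S}(\vec k)$ and the normalization $(\#\mathbb{S}(\vec k))^{-1}$ pass through unchanged, and factoring out the $x$-independent scalar $\mu_x^{-1}$ yields exactly the claimed identity.
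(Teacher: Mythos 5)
Your proposal is correct and follows essentially the same route as the paper: apply the preceding proposition with $G^*$, expand the adjoint as $D(\mu)^{-1}(I-G^T/j)^{-1}D(\mu)$, let the $D(\mu)^{\pm1}$ factors telescope through the commuting diagonal matrices, and extract the $x$th entry by transposing, with $D(\mu)\vec 1=\mu$ and $D(\mu)^{-1}e_x=\mu_x^{-1}e_x$ supplying the $\mu^T$ and $\mu_x^{-1}$ factors. The only immaterial difference is that the paper telescopes before transposing whereas you transpose first.
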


\begin{proof} By convention, 
\begin{align*}
&\Big[\prod_{j=1}^{k;(R)}(I-G/j)^{-1}D(A_{\sigma_j})\Big]\vec 1 \\
&\ \ = D^{-1}(\mu)(I-G^T/k)^{-1}D(\mu)D(A_{\sigma_k})
\cdots D^{-1}(\mu)(I-G^T)^{-1}D(\mu)D(A_{\sigma_1})\vec 1.
\end{align*}
Since $D(\mu)$ and $D(A_{\sigma_\cdot})$ commute, the product equals
$$D^{-1}(\mu)(I-G/k)^{-1}D(A_{\sigma_k})(I-G/(k-1))^{-1}\cdots (I-G)^{-1}D(A_{\sigma_1})\mu.$$
The $x$th entry can be found by taking the transpose and multiplying by $e_x$.  Noting that $D^{-1}(\mu)e_x = \mu_x^{-1}e_x$ finishes the calculation.
\end{proof}

We observe that we can recover a formal, if not particularly useable, expression of the stationary probabilities, and also moments of the mRNA levels $M$ in stationarity; see also \cite{Herbach} for a derivation using a PDE for the generating function. 
\begin{cor}\label{thmstationarymultistate}
	Let $G$ be an irreducible generator matrix on a finite space $\X$ having unique stationary distribution $\mu$. Let $\beta\in(\mathbb R^+)^\X$ and $\delta>0$.   Then, the stationary distribution $\pi_1$ of the multistate promoter process $(E,M)$ parametrized by $G$, $\beta$, and $\delta$ is 
	given as
	\begin{align*}
		\pi_1(i,m|G,\beta,\delta) & =\pi_1(i,m|G/\delta,\beta/\delta,1)\\
		 & =\mu^T\left[\frac1{m!}\prod_{k=1}^m D(\beta/\delta)(I-G/(\delta k))^{-1}\right]\left[\sum_{n\ge0}\frac{(-1)^n}{n!}\prod_{k=m+1}^{m+n}D(\beta/\delta)(I-G/(\delta k))^{-1}\right]e_i.
	\end{align*}
	
	Further, by the factorial moment property of the Poisson distribution, for each $k\in \N_0$,
	\begin{align}
	\label{factorial_mom}
	 & \E{M(M-1)(M-2)\cdots (M-k+1)}\nonumber\\
	 & =\E{(\beta\cdot X/\delta)^k}=\mu^T\left[\prod_{j=1}^k D(\beta/\delta)(I-G/(\delta j))^{-1}\right]\vec 1.
	\end{align}
	where $X\sim$MSBM$(G^*/\delta)$.
\end{cor}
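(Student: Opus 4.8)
The plan is to read everything off the mixture representation furnished by Theorem~\ref{cormultistatestationaryconstruction}. The opening equality $\pi_1(i,m|G,\beta,\delta)=\pi_1(i,m|G/\delta,\beta/\delta,1)$ is essentially free: the theorem presents the stationary law as a $\mathrm{Poisson}(\delta^{-1}\beta\cdot X)$ mixture with $(E,X)\sim\mathrm{MSBMI}(G^*/\delta)$, and both the mixing measure $\mathrm{MSBMI}(G^*/\delta)=\mathrm{MSBMI}((G/\delta)^*/1)$ and the rate $\delta^{-1}\beta\cdot X=1^{-1}(\beta/\delta)\cdot X$ depend on $(G,\beta,\delta)$ only through $(G/\delta,\beta/\delta,1)$ (equivalently, scaling every transition rate of $(E(t),M(t))$ by $1/\delta$ leaves the stationary distribution unchanged while sending $(G,\beta,\delta)\mapsto(G/\delta,\beta/\delta,1)$). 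Writing $c=\beta/\delta$ and $\Lambda=c\cdot X$, I then expand the Poisson pmf and its exponential factor,
$$\pi_1(i,m)=\E{\mathbbm{1}(E=i)\frac{\Lambda^m}{m!}e^{-\Lambda}}=\frac1{m!}\sum_{n\ge0}\frac{(-1)^n}{n!}\E{\mathbbm{1}(E=i)\Lambda^{m+n}},$$
the interchange of sum and expectation being justified since $\X$ is finite, so $\Lambda$ is bounded and the series converges absolutely.

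The crux is to evaluate $\E{\mathbbm{1}(E=i)\Lambda^{k}}$ for $k=m+n$. Since $E=T_1$ with $T_1\sim\mu$, conditioning on $T_1=i$ gives $\E{\mathbbm{1}(E=i)\Lambda^{k}}=\mu_i\,\E{\Lambda^k\mid T_1=i}$, so I need a clean formula for the conditional power moment $\E{(c\cdot X)^k\mid T_1=i}$ of a single linear functional. I would obtain this by specializing Corollary~\ref{cormomentstar}, applied with its generator taken to be $G/\delta$ so that $(\tilde T,\tilde\nu)\sim\mathrm{MSBMI}(G^*/\delta)$ and the resolvents read $(I-G/(\delta j))^{-1}$. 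That corollary is stated for products $\prod_j\tilde\nu(A_j)^{k_j}$ over \emph{disjoint} sets, so the real work is passing from disjoint indicators to the repeated functional $c$. Expanding $c\cdot X=\sum_\ell c_\ell\,\tilde\nu(\{\ell\})$ multinomially, grouping each tuple $(\ell_1,\dots,\ell_k)$ by its distinct values, and applying the disjoint-set formula term by term, I expect the normalization $(\#\mathbb{S}(\vec k))^{-1}$ together with the sum over distinct arrangements to reorganize into $\tfrac1{k!}\sum_{\rho\in S_k}$ over \emph{all} permutations; because $\prod_j c_{\ell_j}$ is symmetric, relabeling shows every permutation contributes the same total, so the symmetrization collapses to one product and yields
$$\E{(c\cdot X)^k\,\big|\,T_1=i}=\mu_i^{-1}\,\mu^T\Big[\prod_{j=1}^k D(\beta/\delta)(I-G/(\delta j))^{-1}\Big]e_i.$$
This collapse of the symmetrized moment formula down to a single product is the step I expect to be the main obstacle, as it is where the disjoint-set hypothesis of Corollary~\ref{cormomentstar} must be reconciled with a repeated functional.

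With this in hand the two displays follow by bookkeeping. The factor $\mu_i^{-1}$ cancels $\mu_i$, giving $\E{\mathbbm{1}(E=i)\Lambda^k}=\mu^T[\prod_{j=1}^k D(\beta/\delta)(I-G/(\delta j))^{-1}]e_i$; substituting $k=m+n$ and splitting the forward product at index $m$ into $\prod_{j=1}^m$ and $\prod_{j=m+1}^{m+n}$ produces exactly the claimed expression for $\pi_1(i,m|G,\beta,\delta)$, with the $\tfrac1{m!}$ and the alternating $\tfrac{(-1)^n}{n!}$ series sitting in their asserted places. For the factorial moments, I would invoke the Poisson identity $\E{M(M-1)\cdots(M-k+1)\mid E,X}=\Lambda^k$, so that $\E{M(M-1)\cdots(M-k+1)}=\E{\Lambda^k}$; summing the conditional moment over $i$ against $\mu_i$ and using $\sum_i e_i=\vec1$ then gives $\mu^T[\prod_{j=1}^k D(\beta/\delta)(I-G/(\delta j))^{-1}]\vec1$, as stated.
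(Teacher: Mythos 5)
Your proposal is correct and follows essentially the same route as the paper: invoke the Poisson mixture representation from Theorem~\ref{cormultistatestationaryconstruction}, expand $(\beta\cdot X/\delta)^k$ multinomially over singletons, and reduce to Corollary~\ref{cormomentstar}, from which the pmf follows by expanding $e^{-\Lambda}$ and splitting the forward product at index $m$. The step you flag as the main obstacle — the cancellation of $(\#\mathbb{S}(\vec k))^{-1}$ against the count of tuples sharing a multiset, so that the symmetrized disjoint-set formula collapses to the single product $\mu^T\prod_{j=1}^k D(\beta/\delta)(I-G/(\delta j))^{-1}e_i$ — does go through exactly as you anticipate, and is the detail the paper leaves to the reader with ``one can now check.''
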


\begin{proof}
Note that the Poisson mixture relation $(T,{\rm Poisson}(\delta^{-1}\beta \cdot \nu))\sim\pi_1$ for $(T,\nu)\sim$MSBMI$(G^*/\delta)$ is stated in Theorem \ref{cormultistatestationaryconstruction}.  The stationary probability formulas now follow from the moment calculation \eqref{factorial_mom}. To verify these
observe $\beta\cdot \nu/\delta = \sum_{i\in \X}\delta^{-1}\beta_i\nu(i)$ and
$${\mathbb E} \big[(\beta \cdot X/\delta)^k\big]
 = \delta^{-k}\sum_{j_1,\ldots, j_k} \beta_{j_1} \cdots \beta_{j_k} {\mathbb E}\big[\nu(j_1) \cdots \nu(j_k)\big]$$
where $1\leq j_1,\ldots, j_k\leq |\X|$.
One can now check, via Corollary \ref{cormomentstar}, that the desired formula is obtained.
\end{proof}

\subsection{On identifiability of mRNA levels}
\label{identifiability_sect}

Consider the multistate mRNA promoter process $(E(t), M(t), X(t))$ parametrized by $G$, $\beta$ and $\delta$ in Definition \ref{defnmultistate}.  To begin the discussion, we will scale out the parameter $\delta$ and take it as $\delta=1$.  Let $(E,X)$ represent the stationary distribution of the process $(E(t),X(t))$. In \cite{Herbach}, it is shown that $(E,X)$ is identifiable by $G$, that is two different generators cannot give the same stationary distribution. 

 Indeed, we sketch the argument for the convenience of the reader:  The associated Laplace transform of $(E,X)$ is $\phi(s)= (\phi_1\ldots, \phi_{|\X|})$, where $\phi_i(s) = E[1(E=i)e^{s\cdot X}]$  satisfies
$$\sum_i s_i \partial_{s_i} \phi(x) = \big(D(s) + G^T)\phi(s).$$
Then, for fixed $s=\beta$, the Laplace transform of $(E, \beta\cdot X)$ is $\Phi(w) = \phi(w\beta)$ where
\begin{align}
\label{ident0}
w\partial_w\Phi = \big(wD(\beta) + G^T\big)\Phi.
\end{align}
In Corollary 4.3 of \cite{Herbach}, $\Phi(w)$ is developed in power series, $\Phi(w)=\sum_{k\geq 0}c_k(\beta)w^k$, where in particular the characterization $c_1(\beta) = (I-G^T)^{-1}D(\beta)\mu$ is made, where $\mu$ is the distribution of $E$. Note that $\mu=\Phi(0)=\Phi'(0)=\mu'$. Hence, if there are two generators $G$ and $H$ for which $(E,X)_G = (E,X)_H$ in law, then $c_1(s; G) = c_1(s;H)$.  Since $s$ is arbitrary and $\mu$ has full support as $G$ is irreducible, we conclude $G=H$.

However, one may ask about identifiability of the mRNA level $M$ itself.  In Theorem \ref{cormultistatestationaryconstruction}, we see that the stationary mRNA level $M$ is determined by the distribution of $\beta\cdot X$.  Since mRNA level readings are available from lab experiments, for inference purposes, it makes sense to study the identifiability of the distribution of $\beta\cdot X$ in terms of $(\beta, G)$.  Since we are not given the distribution of $E$ and $\beta$ is not arbitrary, the previous identifiability argument for $(E,X)$ is not sufficient.  Moreover, in the refractory case, when only one component $\beta_i>0$, the rest vanishing, \cite{Herbach} shows that certain eigenvalues of $G$ are identifiable, although $G$ itself cannot be determined (however, see the example below).  Of course, when $\beta$ is a vector with common entries, $\beta\cdot X = \beta_1$, certainly $X$ cannot be identified.  Nevertheless, since the support of $\beta\cdot X$ is $[\min_i \beta_i, \max_i \beta_i]$, both $\min_i \beta_i$ and $\max_i \beta_i$ are identifiable.

To further the discussion, the Laplace transform of $\beta\cdot X$ is $\vec{1}\cdot \Phi(w)$ which satisfies
$$w \vec{1}\cdot \partial_w \Phi = w\vec{1} D(\beta)\Phi + \vec{1}G^T\Phi = w\beta\cdot \Phi$$
since $\vec{1}G^T = \vec{0}$ given that $G$ is a generator matrix.  One can develop further equations by differentiating in $w$.  One also has expressions for the moments (cf. Corollary \ref{thmstationarymultistate} or \cite{Herbach}).  

Despite the nonlinearity of these relations which seem difficult to negotiate, 
we believe that identifiability of $(\beta, G)$ holds with respect to irreducible generators $G$, when the entries of $\beta$ are strictly ordered, say $\beta =(\beta_1,\ldots, \beta_{|\X|})$ where $\beta_1>\beta_2>\cdots > \beta_{|\X|}\geq 0$, but we leave this theoretical question to a future investigation.  Numerically, in this respect, however, we observe that the work in Section \ref{sec:estimation} gives positive evidence of this claim.

We finish the section with a `refractory example', studied in the numerical study Section \ref{sec:selection}, where one can actually show identifiability:

\begin{ex}
\rm
Consider a three-state refractory model where $\beta_1>0$ and $\beta_2 = \beta_3 = 0$
and $G$ has zero entries, $G_{13} = G_{31} = 0$.  In this case, we claim the model is identifiable. 
Form
$$
G = \begin{pmatrix}
-a & a & 0\\
b & -b-c & c\\
0 & d & -d\\
\end{pmatrix},
$$
where $a$, $b$, $c$, and $d$ are positive real numbers.  According to \cite{Herbach}, the identifiable parameters of $G$ are the (nonzero) eigenvalues of $-G$ and the eigenvalues of $-G_{(1)}$, where $G_{(1)}$ is a matrix obtained by removing the first row and the first column of $G$.  [Herbach considers $H=G^T$, which gives the same formulas.]

Let $\lambda_1$ and $\lambda_2$ be the two nonzero eigenvalues of $-G$. They are the zeros of the equation (in $\lambda$) 
$
\lambda^2 - (a + b + c + d) \lambda + ad + bd + ac = 0
$.
Let $\lambda_3$ and $\lambda_4$ be the eigenvalues of $-G_{(1)}$. They are the zeros of the equation (in $\lambda$)
$
\lambda^2 - (b+c+d) \lambda + bd = 0
$.
Then $a$, $b$, $c$, and $d$ can be expressed by $\lambda_i, i = 1, 2, 3, 4$ as 
\begin{align*}
&a = \lambda_1 + \lambda_2 - (\lambda_3 + \lambda_4), \ \ \
b = \lambda_3 + \lambda_4 - (\lambda_1\lambda_2 - \lambda_3\lambda_4)/a,\\
&c = (\lambda_1\lambda_2  - \lambda_3\lambda_4)/a  - \lambda_3\lambda_4/b, \ \ \
d = \lambda_3\lambda_4 / b,
\end{align*}
meaning $a$, $b$, $c$, and $d$ are identifiable. 
\end{ex}

\section{Protein production in the multistate mRNA promoter process}
\label{protein_sect}

We discuss now an extension of the multistate promoter model which incorporates protein production.
Specifically, when the multistate promoter process $(E(t),M(t))$ is in state $(i,m)$, we formulate that individual proteins are produced at rate of $\alpha m$ and the protein level $p$ degrades at rate $\gamma p$.  Such an ansatz corresponds to the idea that each individual mRNA independently produces protein at rate $\alpha>0$ and each protein degrades at rate $\gamma>0$.  A more involved model, which we leave to future consideration, would involve `feedback' between the protein levels and the promoter-mRNA dynamics  

A version of this model was indicated in \cite{Herbach}, and stationary distributions in the `refractory' case, when only one $\beta_i$ is positive, were found in \cite{Choudhary}.  
In this context, our goal will be to derive the stationary distribution in the general $(\beta, \delta, \alpha, \gamma, G)$ model through the stick-breaking apparatus.  

The strategy will be to consider
 `bounded joint multistate mRNA-protein processes' which restrict mRNA levels below a capacity level $c$.
Such bounded joint processes
have the same abstract finite-state promoter structure as the multistate mRNA promoter process, with stationary distributions given in terms of Markovian stick-breaking measures.  The idea is to take a limit now as the capacity level $c\uparrow\infty$ to recover the stationary distribution in the general unbounded model.  Importantly, clumped representations of the MSBM's for the bounded joint process will be of use in this regard.

We now define carefully the bounded joint process.
\begin{defi}[Bounded joint multistate mRNA-protein process] \label{defnproteinbounded}
A bounded joint process is the Markov jump process $(E(t),M(t),P(t))$ on $\X\times\{0,1,2,...,c\}\times\N_0$ with rates
$$(i,m,p)\rightarrow(j,n,q)\ \text{ at rate }\ \left\{\begin{array}{cclll}
G_{i,j} & ; & i\neq j, & n=m, & q=p\\
\beta_i & ; & i=j, & n=m+1\leq c, & q=p\\
\delta m & ; & i=j, & n=m-1, & q=p\\
\alpha m & ; & i=j, & n=m, & q=p+1\\
\gamma p & ; & i=j, & n=m, & q=p-1\\
0 & ; & & \text{o.w.}
\end{array}\right.$$
We associate to this process the bounded generator matrix over finite state space $\tilde\X=\X\times\{0,1,2,...,c\}$
\begin{align}
\label{G^cformula}
	\tilde G^c_{(i,m),(j,n)} & =\mathbbm1(i\neq j,m=n)G_{i,j}+\mathbbm1(i=j,n=m+1\le c)\beta_i+\mathbbm1(i=j,n=m-1)\delta m\nonumber\\
	 & \hspace{2cm}+\mathbbm1(i=j,m=n)(G_{i,i}-\beta_i\mathbbm 1(m<c)-\delta m)
\end{align}
and denote its stationary distribution
as $\pi_2^c(i,m,p|G,\beta,\delta,\alpha,\gamma,c)$.
\end{defi}

One may understand the bounded joint process as follows.
Let $(E(t),M(t),P(t))$ be a bounded joint process with generator $G$, production rates $\beta$ and $\alpha$, death rates $\delta$ and $\gamma$, and cap $c$. Denote the same process as 
\begin{align}
\label{bdd_relation}
(\tilde E(t),\tilde M(t)) \ {\rm with \ }\tilde E(t)=(E(t),M(t)) \  {\rm and \ }\tilde M(t)=P(t).
\end{align} 

Then, we observe $(\tilde E(t),\tilde M(t))$ is a multistate promoter process taking values in $\tilde\X\times\N_0$ parameterized by generator $\tilde G^c$, production rates $\tilde\beta_{i,m}=\alpha m$, and death rate $\tilde\delta=\gamma$. In particular, as $\tilde \X$ is a finite space,
for all $(i,m,p)\in\X\times\{0,1,\ldots, c\}\times \N_0$,
\begin{align}
\label{propequatemodels}
\pi_2^c(i,m,p|G,\beta,\delta,\alpha,\gamma,c)=\pi_1((i,m),p|\tilde G^c,\tilde\beta,\tilde\delta),
\end{align}
for which there is a stick-breaking relation via Theorem \ref{cormultistatestationaryconstruction}.

\medskip
We now state carefully the unbounded joint process.
\begin{defi}[(Unbounded) joint multistate mRNA-protein process] The (unbounded) joint process is the Markov jump process $(E(t),M(t),P(t))$ on $\X\times\N_0^2$ with rates
$$(i,m,p)\rightarrow(j,n,q)\ \text{ at rate }\ \left\{\begin{array}{cclll}
G_{i,j} & ; & i\neq j, & n=m, & q=p\\
\beta_i & ; & i=j, & n=m+1, & q=p\\
\delta m & ; & i=j, & n=m-1, & q=p\\
\alpha m & ; & i=j, & n=m, & q=p+1\\
\gamma p & ; & i=j, & n=m, & q=p-1\\
0 & ; & & \text{o.w.}
\end{array}\right.$$
We associate to this process the unbounded generator matrix
\begin{align*}
\tilde G^\infty_{(i,m),(j,n)}&=\mathbbm1(i\neq j,m=n)G_{i,j}+\mathbbm1(i=j,n=m+1)\beta_i\\
&\ \ \ \ \ +\mathbbm1(i=j,n=m-1)\delta m+\mathbbm1(i=j,m=n)(G_{i,i}-\beta_i-\delta m)
\end{align*}
and denote its stationary distribution as $\pi_2^\infty(i,m,p|G,\beta,\delta,\alpha,\gamma)$.
\end{defi}

We remark that existence of a unique stationary distribution $\pi_2^\infty$ which integrates $e^{\epsilon_1i+\epsilon_2m+\epsilon_3 p}$, for $\epsilon_1,\epsilon_2,\epsilon_3>0$, follows from an application of Theorem 1.1 \cite{Meyn}.

The type of association with a multistate mRNA process made earlier with respect to the bounded joint process cannot be implemented directly with respect to the unbounded joint protein process. Indeed, the generator matrix $\tilde G^\infty$ associated to the unbounded joint protein process is itself unbounded, and hence cannot be normalized to construct a stochastic kernel as discussed after Definition \ref{defnmsbm}. However, we will see that the `clumped' stick-breaking construction of Proposition \ref{propaltMSBM} may still be understood and used in this context.

\begin{theorem}
\label{protein_thm}
	Let $(E(t),M(t),P(t))$ be an unbounded joint process with $E$-generator $G$, production rates $\beta$ and $\alpha$, and death rates $\delta$ and $\gamma$. Then, the associated stationary distribution $\pi_2(i,m,p|G,\beta,\delta,\alpha,\gamma)$ may be sampled as follows:
	
	Define a stochastic kernel $K$ over $\X\times\N_0$ by
	$$K_{(i,m),(j,n)}=\frac{\tilde G^{\infty,*}_{(i,m),(j,n)}}{-\tilde G^{\infty,*}_{(i,m),(i,m)}}\mathbbm 1((i,m)\neq(j,n))$$
	
	 Let now $\bf T$ be a homogeneous Markov chain over $\X\times\N_0$ with transition kernel $K$ and initial distribution $\pi_1$, which may be sampled as specified by Definition \ref{defnmsbm} and Theorem \ref{cormultistatestationaryconstruction}. Conditioned on $\bf T$, let $\bf Z$ be a sequence of independent random variables with $Z_j\sim$Beta$(1,-\tilde G^\infty_{T_j,T_j}/\gamma)$.  Consider the residual allocation model ${\bf R}=\{Z_j\prod_{i=1}^{j-1}(1-Z_i)\}_{j\geq 1}$, and define a random vector $X\in\Delta_{\X\times\N_0}$ by
	$$X_{(i,m)}=\sum_{j=1}^\infty R_j\delta_{T_j}((i,m))$$
	Then, if
	$$P\Big|{\bf T,R}\sim\text{Poisson}\left(\frac\alpha\gamma\sum_{(i,m)}mX_{(i,m)}\right)$$
	and we denote $T_1=(E,M)\sim \pi_1(i,m|G, \beta,\delta)$, the stationary distribution
	$$\pi_2(\ \cdot\ ,\ \cdot\ ,\ \cdot\ |G,\beta,\delta,\alpha,\gamma)\sim (E,M,P),$$
	 is the limit in terms of the stationary distributions $\pi_1(i,m,p|\tilde G^c, \tilde \beta^c,\tilde \delta) \sim (E^c, M^c, P^c)$ of bounded joint processes (cf. \eqref{propequatemodels}),
	$$\pi_2(i,m,p|G,\beta,\delta,\alpha,\gamma)\stackrel{d}{=} \lim_{c\rightarrow\infty}\pi_1((i,m),p|\tilde G^c,\tilde\beta^c,\tilde \delta),$$
	and the joint moments of $(M,P)$ can be captured in terms of the limit
	$${\bf E}[M^kP^\ell] = \lim_{c\rightarrow\infty} {\bf E}[(M^c)^k(P^c)^\ell]$$
	where ${\bf E}[(M^c)^k(P^c)^\ell] = {\bf E}\big[(M^c)^k{\bf E}[(P^c)^\ell| E^c, M^c]\big]$ has calculation using the relations \eqref{bdd_relation} and Corollary \ref{thmstationarymultistate}.
\end{theorem}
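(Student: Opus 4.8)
The plan is to derive both the clumped stick-breaking sampler and the moment formulas for the unbounded joint process as $c\to\infty$ limits of the corresponding objects attached to the bounded joint processes, where everything already lives on a \emph{finite} promoter space and is governed by the established finite-state theory. To this end I would first record the finite-$c$ representation. By the identification \eqref{bdd_relation}--\eqref{propequatemodels}, the bounded process $(\tilde E,\tilde M)=((E,M),P)$ is a genuine multistate promoter process on $\tilde\X=\X\times\{0,1,\ldots,c\}$ with generator $\tilde G^c$, production rates $\tilde\beta_{(i,m)}=\alpha m$, and death rate $\tilde\delta=\gamma$. Theorem \ref{cormultistatestationaryconstruction} then gives $(\tilde E^c,\tilde X^c)\sim\text{MSBMI}((\tilde G^c)^*/\gamma)$ together with $P^c\mid \tilde E^c,\tilde X^c\sim\text{Poisson}\big(\gamma^{-1}\tilde\beta\cdot\tilde X^c\big)=\text{Poisson}\big(\tfrac\alpha\gamma\sum_{(i,m)}m\,\tilde X^c_{(i,m)}\big)$. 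Feeding this $\text{MSBMI}$ into Proposition \ref{propaltMSBM} produces exactly the jump kernel and Beta weights of the statement: since the adjoint preserves diagonal entries, $(\tilde G^c)^*_{(i,m),(i,m)}=\tilde G^c_{(i,m),(i,m)}$, the factor $1/\gamma$ cancels from the normalized jump kernel, and the clump weights are $\text{Beta}\big(1,-\tilde G^c_{T_j,T_j}/\gamma\big)$. The chain starts from the stationary law of $\tilde G^c$, the bounded $(E,M)$-marginal.

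Second, I would pass to the limit at the level of the construction. For each fixed $(i,m)$, once $c>m$ the on- and off-diagonal entries of $\tilde G^c$ agree with those of $\tilde G^\infty$, so the jump kernels $K^c\to K$, the Beta parameters converge entrywise, and the initial laws $\mu^c\to\pi_1$; crucially, the clumped construction uses only the jump chain and the (finite) holding rates, never the boundedness of $\tilde G^\infty$. I would then argue that the clumped measures $\tilde X^c$ converge in distribution to the $X$ built from $\mathbf T$ and $\mathbf R$ in the statement, and that the Poisson mixtures converge accordingly. The residual allocation weights still sum to one in the limit because $\mathbf T$, the jump chain of an irreducible process, returns infinitely often to low-$m$ states on which $-\tilde G^\infty_{(i,m),(i,m)}/\gamma$ is bounded, forcing $\prod_j(1-Z_j)\to0$ almost surely.

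Third, I would identify this limit with $\pi_2^\infty$ and obtain the moments. Existence, uniqueness, and an exponential moment bound for $\pi_2^\infty$ are supplied by the cited application of Theorem 1.1 of \cite{Meyn}. A coupling in which the bounded and unbounded dynamics agree until $M$ first reaches the cap shows that every weak subsequential limit of $\pi_2^c$ is stationary for $\tilde G^\infty$, so uniqueness gives $\pi_2^c\Rightarrow\pi_2^\infty$; this forces $\pi_2^\infty$ to coincide with the clumped-construction limit of the previous step. Finally, each bounded moment is computed by the tower property, using that $P^c\mid E^c,M^c$ is Poisson and that the mRNA-side factorial moments are given by Corollary \ref{thmstationarymultistate}, and the uniform exponential bounds upgrade pointwise convergence to $\mathbf E[M^kP^\ell]=\lim_c\mathbf E[(M^c)^k(P^c)^\ell]$.

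The main obstacle is securing \emph{uniform-in-$c$} tightness and uniform integrability: one must exhibit a single Lyapunov or exponential-moment estimate for the generators $\tilde G^c$ whose constants do not degenerate as $c\to\infty$, so that the limit $c\to\infty$ commutes simultaneously with weak convergence to $\pi_2^\infty$, with convergence of the clumped stick-breaking measures, and with convergence of the joint moments. Because the truncation leaves the low-mRNA dynamics untouched, I expect this estimate to follow from the same drift computation underlying the Meyn application, carried out uniformly across caps.
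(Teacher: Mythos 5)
Your proposal is correct and follows essentially the same route as the paper: apply Theorem \ref{cormultistatestationaryconstruction} and Proposition \ref{propaltMSBM} to the bounded joint process viewed as a multistate promoter process on the finite space $\X\times\{0,\dots,c\}$ with generator $\tilde G^c$, rates $\tilde\beta_{(i,m)}=\alpha m$ and $\tilde\delta=\gamma$, then pass to the limit $c\to\infty$ of the jump kernels, Beta parameters, initial laws, and stationary measures, identifying the limit with $\pi_2^\infty$ through uniqueness of the solution to the balance equation for the (banded) generator $\tilde G^\infty$. The one point where you diverge is the final technical step: where you flag a uniform-in-$c$ Lyapunov/exponential-moment estimate as the main obstacle, the paper instead obtains tightness and the uniform integrability needed for moment convergence directly from a monotone coupling with $E^c(t)\equiv E^\infty(t)$, $M^c(t)\le M^\infty(t)$, $P^c(t)\le P^\infty(t)$, so that all the bounded stationary laws are dominated by $\pi_2^\infty$, whose exponential integrability is already supplied by the cited application of Meyn--Kontoyiannis.
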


\section{Bayesian Inference}\label{sec:estimation}

Given the possibility to extract mRNA level readings from cells in laboratory, it is natural to explore statistical inference procedures of parameters $\beta$ and $G$ from data (cf. \cite{1Albayrak}, \cite{12Herbach et al}). In the following, we concentrate on Bayesian estimation of model parameters and selection of an appropriate underlying model based on mRNA readings. The methods are demonstrated by synthetic experiments where the data are samples from a given stationary distribution. The stick-breaking form of the stationary distribution $(E, M, X)$ in the multistate mRNA promoter model with parameters $\beta$ and $G$ (having scaled $\delta =1$), will be useful in this regard. In particular, one can directly sample from the stationary distribution to a given level of accuracy by truncating the series.
  
  To compare with literature, in \cite{1Albayrak} and \cite{12Herbach et al}, inference procedures were performed for parameters in a multistate promoter mRNA-protein interaction network where the promoter space $\X = \{0,1\}$ has two states.  As remarked in the two-state setting, the mRNA level stationary level $M$ is a Poisson-Beta mixture.  With certain approximations, extending also to the stationary protein level $P$, results were found in accord with laboratory data.  
  
  From a different point of view, in \cite{Lin-Buchler}, synthetic data taken at four time points from a multistate promoter mRNA model where $|\X| = 2, 3$ and some components of the parameters $\beta$ and $G$ vanish a priori, inference of parameters is carried out.
  
  In the following, we restrict also to $|\X|=2, 3$ state multistate promoter mRNA models. Our emphasis will be on understanding the benefit from using an explicit stick-breaking formulation of the stationary measure.  Since we also have derived a stick-breaking representation of the stationary distribution in models with protein interactions, the same formalism will apply.

We present, in Section \ref{subsec:estimation}, a Bayesian approach for estimating the parameters in the multistate promoter model based on data from the stationary distribution. Although the mass function of the stationary distribution is derived in Corollary \ref{thmstationarymultistate}, it cannot be used directly for inference due to the slow convergence of the series in its formulation. We overcome the difficulty by approximating the mass function by using Monte Carlo simulations according to the stick-breaking representation of the stationary distribution. The performance of this estimation is examined under various multistate promoter models. 

In a different track, the promoter model for a gene is often fixed a priori in the literature. The number of the promoter states and the nonzero parameters in $G$ are often assumed known before data analysis. 
In this context, 
we demonstrate in Section \ref{sec:selection} that the promoter model behind the data can be selected by utilizing the stick-breaking representation of the stationary distribution.

\subsection{Parameter estimation.}\label{subsec:estimation}
Given $L$ observations $M_1, \ldots, M_L$ from the following model
\begin{equation}\label{eq:model}
\begin{gathered}
M_l | X_l, \beta, G \overset{ind.}{\sim} \text{Poisson}(\beta \cdot X_l), l= 1, \ldots, L,\\
X_l \overset{iid}{\sim} \text{MSBM(G)}, l = 1, \ldots, L
\end{gathered}
\end{equation}
our goal is to estimate the parameters $\beta$ and $G$. We first consider the case that $\beta$ have nonzero and distinct elements and all the entries in $G$ are nonzero.
Under this setting, we describe how to estimate $\beta$ and $G$ in a Bayesian approach \cite{GelmanBayesian}. Then we discuss how to modify the procedure when a zero constraint or an equality constraint is desired for some of the parameters.

In a Bayesian framework, parameters are assumed to have a prior distribution, representing experimenter's belief on the parameter values before observing data. Given data from certain model, prior belief are updated with the information in the data to produce the posterior distribution, the conditional distribution of parameters given data. The posterior mean is a commonly used estimator for model parameters.

Without any equality or zero constraints, the parameter space of model \eqref{eq:model} has $n^2$ dimensions where $n = |\X|$. Following the discussion in Section \ref{identifiability_sect}, we assume $\beta_1 > \beta_2 > \cdots > \beta_n > 0$. 
This requirement avoids the non-identifiability issue brought by permuting the states in the multistate promoter model, but it, together with the positive constraint on the rate parameters, restricts the parameter space to a subset of the Euclidean space, which brings an extra difficulty to the statistical inference. To get rid of these restrictions, we transform the parameters $(\beta_1, \ldots, \beta_n, G_{1,2}, \ldots, G_{n,n-1})$ into 
\begin{equation}\label{eq:eta}
\eta = (\log (\beta_1 - \beta_2), \log (\beta_2 - \beta_3), \ldots, \log \beta_n, \log G_{1,2}, \ldots, \log G_{n, n-1}).
\end{equation}
As the transformation is one-to-one, estimating the original model parameters is equivalent to estimating $\eta$.

To estimate $\eta$, we consider a Bayesian approach by placing independent priors on its elements.
\begin{equation}\label{eq:model_prior}
\pi(\eta) = \prod_{j=1}^{n^2} \pi(\eta_j).
\end{equation}
We used the same log-gamma prior for $\eta_j, j=1, \ldots, n^2$ (that is $\exp(\eta_j) \overset{iid}{\sim} \text{Gamma}(0.01, 0.01)$) in all the synthetic experiments presented later in this section. Other priors such as Guassian priors can also be used.
Given the choice of prior distribution, the posterior distribution of $\eta$ is 
\begin{equation}\label{eq:posterior}
\pi(\eta \mid M_1, \ldots, M_L) = \frac{f(M_1, \ldots, M_L | \eta)\pi(\eta)}{\int f(M_1, \ldots, M_L | \eta)\pi(\eta) d\eta} \propto \prod_{l = 1}^L E[\exp(-\lambda_l)\lambda_l^{M_l}] 
\prod_{j=1} ^ {n^2} \pi(\eta_j),
\end{equation}
where $f(M_1, \ldots, M_L | \eta)$ is the probability mass function of $M_1, \ldots, M_L$, $\lambda_l = \beta \cdot X_l$, and the expectation is taken with respect to $X_l$. The Bayesian estimator is then $E(\eta | M_1, \ldots, M_L)$.

Since it is difficult to compute the posterior mean analytically, we use a Gibbs sampler \cite{GelfandSmith1990}, a special Markov Chain Monte Carlo algorithm \cite{RobertCasella2004} to draw samples from the posterior distribution. Then, the parameters are estimated by the posterior sample mean. In a Gibbs sampler, parameters are initialized at an arbitrary value $\eta^{(0)} = (\eta_1^{(0)}, \ldots, \eta_{n^2}^{(0)})$. In each iteration, each parameter is sampled from its full conditional distribution given the data and the current value of other parameters. In our case, in iteration $g$, we should draw $\eta_j^{(g)}$ from 
\begin{equation}\label{eq:full_conditionals}
\pi(\eta_j | \eta_1^{(g)}, \ldots, \eta_{j-1}^{(g)}, \eta_{j+1}^{(g-1)}, \ldots, \eta_{n^2}^{(g-1)}, M_1, \ldots, M_L).
\end{equation}

However, the distribution \eqref{eq:full_conditionals} is difficult to directly sample from. A Metropolis-Hastings (MH) algorithm \cite{Hastings1970,RobertCasella2004} is adopted to sample $\eta_j$ from \eqref{eq:full_conditionals}. 
More specifically, in iteration $g$ of the Gibbs sampler, given the current value $\eta_j^{(g-1)}$ of $\eta_j$, a proposed value $\eta_j^{\prime}$ is generated by a Gaussian random walk, that is $\eta_j^{\prime} \sim N(\eta_j^{(g-1)}, \sigma_j^2)$. Then $\eta_j^{\prime}$ is accepted as a new sample with probability
\begin{equation}\label{eq:acp_prob}
\alpha = \min\left\{1, \frac{\pi (\eta_j^{\prime} \mid \eta_1^{(g)}, \ldots, \eta_{j-1}^{(g)}, \eta_{j+1}^{(g-1)}, \ldots, \eta_{n^2}^{(g-1)}, M_1, \ldots, M_L) }{ \pi (\eta_j^{(g-1)} \mid \eta_1^{(g)}, \ldots, \eta_{j-1}^{(g)}, \eta_{j+1}^{(g-1)}, \ldots, \eta_{n^2}^{(g-1)}, M_1, \ldots, M_L)}\right\}.
\end{equation}
If $\eta_j^\prime$ is not accepted, $\eta_j^{(g-1)}$ is reused as the sample obtained in this iteration. In other words,
$$
\eta_j^{(g)} = \left\{
\begin{array}{ll}
\eta_j^\prime &\text{~with probability~} \alpha;\\
\eta_j^{(g-1)} &\text{~with probability~} 1-\alpha.
\end{array}\right.
$$

The key step of performing the MH step is to evaluate $\alpha$. As $\alpha$ is determined by the ratio of the full conditional density of $\eta_j$ at $\eta_j^{\prime}$ and $\eta_j^{(g-1)}$ and the full conditional density of $\eta_j$ is proportional to $\pi(\eta|M_1, \ldots, M_L)$, it is sufficient to evaluate the left hand side of \eqref{eq:posterior}.
Although it can not be computed exactly due to the intractable expectation, we can use Monte Carlo simulations to approximate the expectation. Given the value of $\eta$, thus $\beta$ and $G$, $E[\exp(-\lambda)\lambda^{M_l}]$ is approximated by
$$\frac{1}{B} \sum_{b = 1}^B \exp(-\lambda_b)\lambda_b^{M_l},$$
where $\lambda^{(b)} = \beta \cdot X^{(b)}$ and $X^{(b)}, b=1, \ldots, B$ are iid samples from $\text{MSBM}(G)$.

 Truncations of the stick-breaking constructions in \eqref{eq:MSBM} are used when drawing samples from $\text{MSBM}(G)$. For a given $G$, the number of terms in the truncated series can be determined explicitly based on the error tolerance. More specifically, to guarantee that the error of truncation is below $\varepsilon$ with probability higher than $1-p$, we truncated the series at term $1 + w(G, \varepsilon, p)$ where $w(G, \varepsilon, p)$ is the smallest integer $w$ such that $P(Z \leq w) \geq 1- p$ for a Poisson random variable $Z$ with parameter $-\max_{1\leq i \leq n}{|G_{i,i}|}\log(\varepsilon)$ (cf. discussion near \eqref{truncation error}). In the experiments presented later in this section, we further restrict the maximum number of terms involved in the calculations to avoid extremely long computing time for certain values of $G$.

Sometimes, it is desirable to obtain estimates of $(\beta, G)$ with zero constraints or equality constraints on the elements. For example, if one knows from previous investigation that the gene expression of interest follows a two-state refractory promoter model, then the desired estimate should have constraint $\beta_1 > \beta_2 = 0$. If it is known a priori that $X_l, l=1, \ldots, L$ in \eqref{eq:model} should follow a Dirichlet distribution in a three-state model, then one would expect an estimate of $G$ with $G_{2,1} = G_{3, 1}$, $G_{1,2} = G_{3,2}$, and $G_{1,3} = G_{2,3}$. The estimation procedure described above will not produce estimates satisfying the constraints, but a slight modification to the procedure will suffice as the constraints essentially reduce the dimension of the parameter space. Estimating constrained $(\beta, G)$ is equivalent to estimating a transformed parameter vector $\eta$ whose dimension is lower than $n^2$. In the two-state refractory promoter example, $\eta = (\log \beta_1, \log G_{1,2}, \log G_{2,1})$. In the three-state Dirichlet distribution example, $\eta = (\log (\beta_1-\beta_2), \log(\beta_2 - \beta_3), \log \beta_3, \log G_{2,1}, \log G_{1, 2}, \log G_{1,3})$. Once identifying the equivalent unconstraint parameter vector $\eta$, a Gibbs sampler similar to the one described above can be applied to estimate $\eta$. 

We conduct synthetic experiments to study the estimation performance of the above procedure. Four instances of the multistate promoter model described in \eqref{eq:model} are considered in the experiments:

\begin{itemize}
\item a two-state model,
\item a three-state model with $\text{MSBM}(G)$ being a Dirichlet distribution,
\item a three-state model with $\text{MSBM}(G)$ having a symmetric structure in $G$, and
\item a three-state model with $\text{MSBM}(G)$ having an asymmetric structure in $G$.
\end{itemize}

Three choices of sample size, $L = 100, 500, 1000$, are considered to investigate the performance of the sampler. Twenty datasets are generated for each model and each choice of $L$. The parameter estimation procedure described previously is applied to each of the datasets. The root mean squared error (RMSE) of the posterior mean estimator for each parameter is recorded for evaluation. It is computed as 
$$\text{RMSE} = \sqrt{\frac{1}{20} \sum_{i=1}^{20} (\hat\theta_i - \theta)^2},$$ where $\theta$ is parameter value used for generating data and $\hat\theta_i$ is the estimated value from the $i$th dataset. Smaller RMSE indicates better estimation performance. In the following, we present the results for each model instance.

\subsubsection{Two-state model}
The parameter setting of the two-state model is adapted from \cite{12Herbach et al}. More specifically, we set $\beta = (1000, 1)^\top$, and
$$G=\left(\begin{array}{rr}
-10& 10\\
.34&-.34
\end{array}\right)$$
when generating data. Table \ref{table:2state} presents the RMSE for parameters in the two-state model. Figure \ref{fig:boxplot_2state} gives the boxplots of the posterior means of different parameters from 20 replications. The results show that the estimation performance improves as the sample size increases. Figure \ref{fig:density_2state} displays the posterior density of the parameters for one dataset. 

\begin{table}[htb]
\centering
\caption{RMSE for parameters in the two-state model.}\label{table:2state}
\begin{tabular}{ccccc}
\hline
$L$ & $\beta_1$ & $\beta_2$ & $G_{2,1}$ & $G_{1,2}$ \\
\hline 
100 & 542.73 & 0.41 & 0.06 & 5.84 \\
500 & 412.50 & 0.13 & 0.03 & 4.36 \\
1000 & 319.84 & 0.13 & 0.02 & 3.36\\
\hline
\end{tabular}
\end{table}

\subsubsection{Three-state Dirichlet model}
The parameter values for the three-state Dirichlet model are
$\beta = (1000, 100, 1)^\top$ and
$$G = \left(
\begin{array}{rrr}
-11 & 1.0 & 10.0\\
0.34 & -10.34 & 10.0\\
0.34 & 1.0 & -1.34
\end{array}\right).
$$
Table \ref{table:3state_diri} and Figures \ref{fig:boxplot_3state_diri} and \ref{fig:density_3state_diri} present the results for the three-state Dirichlet model. Trends are similar to those observed in the two-state model.
\begin{table}[htb]
\centering
\caption{RMSE for parameters in the three-state Dirichlet model.}\label{table:3state_diri}
\begin{tabular}{ccccccc}
\hline
$L$ & $\beta_1$ & $\beta_2$ & $\beta_3$ & $G_{2,1}$ & $G_{3,2}$ & $G_{1,3}$  \\
\hline
100 & 438.89 & 118.91 & 2.13 & 0.10 & 0.56 & 3.49\\
 500& 327.74 & 112.28 & 1.17 & 0.07 & 0.35 & 3.38\\
1000& 269.92 & 63.50 & 0.78 & 0.06 & 0.23 & 2.80\\
\hline
\end{tabular}
\end{table}

\subsubsection{Symmetric three-state model}\label{sec:model3}
The parameters used in the general symmetric three-state model for generating data are $\beta = (300, 150, 20)^\top$ and
$$G = \left(
\begin{array}{rrr}
-2.0 & 2.0 & 0.0\\
0.5 & -1.0 & 0.5\\
0.0 & 2.0 & -2.0
\end{array}\right).
$$ 
The model structure of $G$ is known a priori, meaning that $G_{1,3}$ and $G_{3,1}$ are fixed at zero in all iterations of the Gibbs sampler for estimating $$\eta = (\log (\beta_1 - \beta_2), \log(\beta_2 - \beta_3), \log(\beta_3), \log(G_{1,2}), \log(G_{2,1}), \log(G_{2,3}), \log(G_{3,2})).$$ Table \ref{table:3state_gen} and Figures \ref{fig:boxplot_3state_beta}--\ref{fig:density_3state_G} present the results for estimating the nonzero parameters. In general, similar to the results for the two previous models, the estimation accuracy for all the parameters improves as the sample size increases. However, the RMSE of the parameters in $G$ for $L=500$ is greater than that for $L=100$. This happens because one of the 20 datasets produces an estimated value far from the true value. This outlier distorts the value of RMSE in the case $L=500$. As the boxplots in Figure \ref{fig:boxplot_3state_G} show, the estimates from most of the datasets do improve as the sample size increases.

We observe that the estimated values of the parameters are in close vicinity of the true values especially when the sample size is large. Although we do not theoretically prove the identifiability of the general multistate promoter models, this observation suggests that the model is likely to be identifiable at least for the three-state case considered here.

\begin{table}[htb]
\centering
\caption{RMSE for parameters in the symmetric three-state model.}\label{table:3state_gen}
\begin{tabular}{cccccccc}
\hline
$L$ & $\beta_1$ & $\beta_2$ & $\beta_3$ & $G_{1,2}$ & $G_{2,1}$ & $G_{2,3}$ & $G_{3,2}$\\
\hline
100 & 133.42 & 51.68 & 8.28 & 11.80 & 7.29 & 7.80 & 2.91\\
500 & 132.94 & 40.69 & 6.09 & 9.01 & 7.88 & 18.56 & 4.07\\
1000 & 117.02 & 10.88 & 5.11 & 4.66 & 0.59 & 0.16 & 0.35\\
\hline
\end{tabular}
\end{table}

\subsubsection{Asymmetric three-state model}
We now consider another general three-state model with $\beta = (300, 150, 20)^\top$ and
$$ G = \left(
\begin{array}{rrr}
-1.0 & 1.0 & 0.0 \\
 0.0 &  -0.5 & 0.5 \\
 1.0 & 0.5 & -1.5\\
\end{array}\right).
$$
The $G$ matrix does not have a symmetric pattern as the one in Section \ref{sec:model3}. We again assume the structure of $G$ is known when estimating the parameters. 
The results shown in Table \ref{table:3state2_gen} and Figures \ref{fig:boxplot_3state2_beta}--\ref{fig:density_3state2_G} once again suggest that the estimation performance improves as the sample size increases and that the model parameters are identifiable.

\begin{table}[htb]
\centering
\caption{RMSE for parameters in the asymmetric three-state model.}\label{table:3state2_gen}
\begin{tabular}{cccccccc}
\hline
$L$ & $\beta_1$ & $\beta_2$ & $\beta_3$ & $G_{1,2}$ & $G_{2,1}$ & $G_{2,3}$ & $G_{3,2}$\\
\hline
100 & 104.24 & 37.37 & 10.03 & 3.29 &  4.36 & 0.69 & 1.05\\
500 & 14.42 & 2.75 & 5.13 & 0.27 & 0.12 & 0.18 & 0.26\\
1000 & 7.21 & 2.17 & 5.08 & 0.15 & 0.11 & 0.12 & 0.34\\
\hline
\end{tabular}
\end{table}

\begin{figure}[H]
\includegraphics[width=\textwidth]{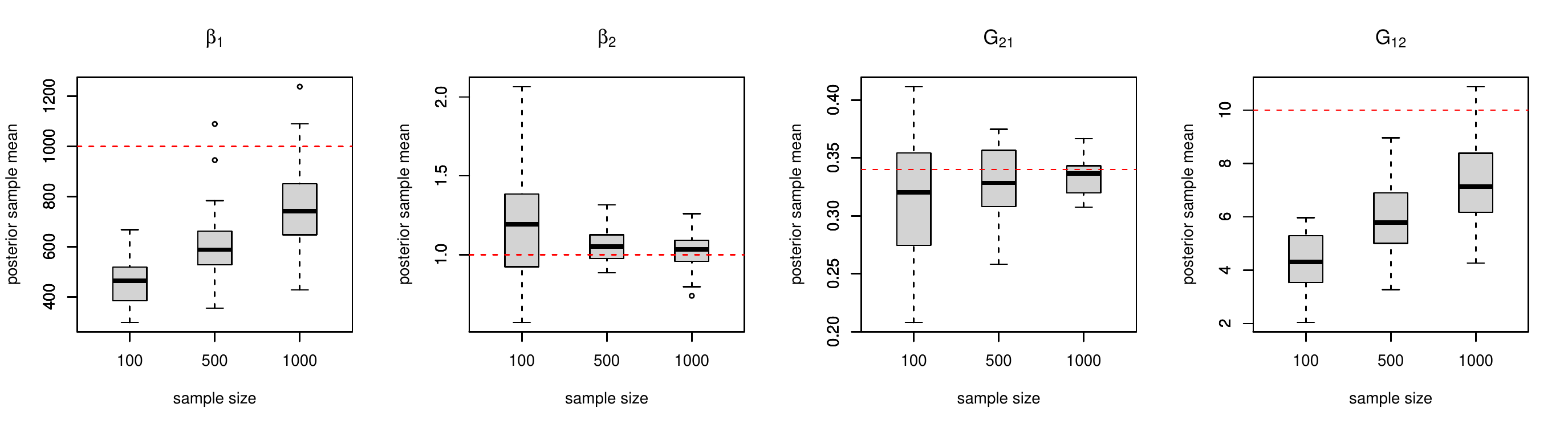}
\caption{Boxplots of the posterior means in the two-state model. The horizontal lines indicate the true values.}\label{fig:boxplot_2state}
\end{figure}

\begin{figure}[H]
\includegraphics[width=\textwidth]{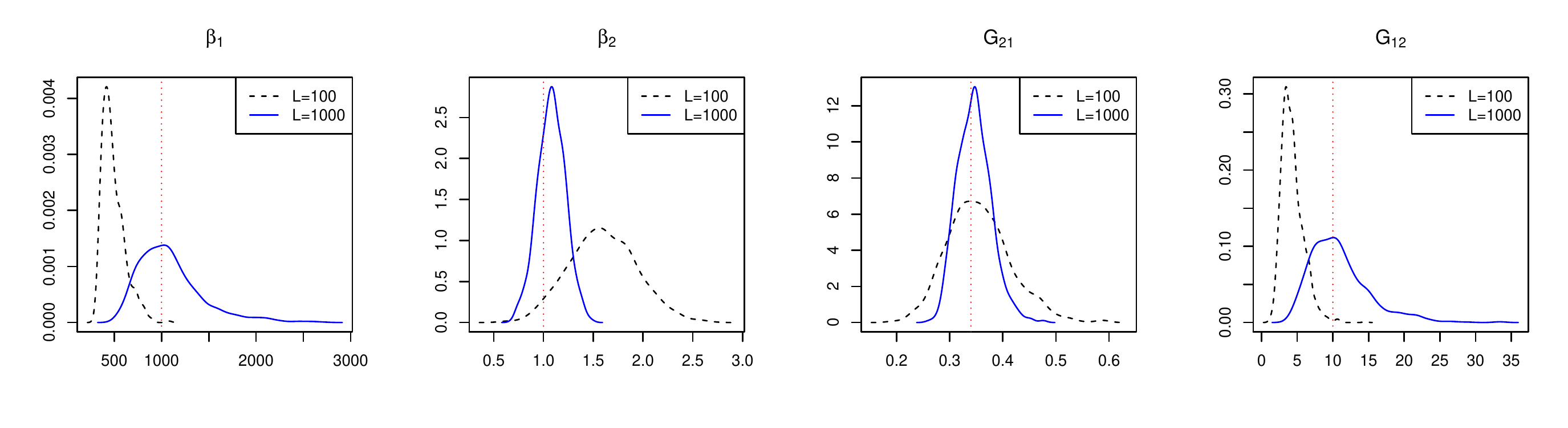}
\caption{Posterior densities of the parameters in the two-state model. The vertical lines indicate the true values.}\label{fig:density_2state}
\end{figure}

\begin{figure}[H]
\includegraphics[width=0.75\textwidth]{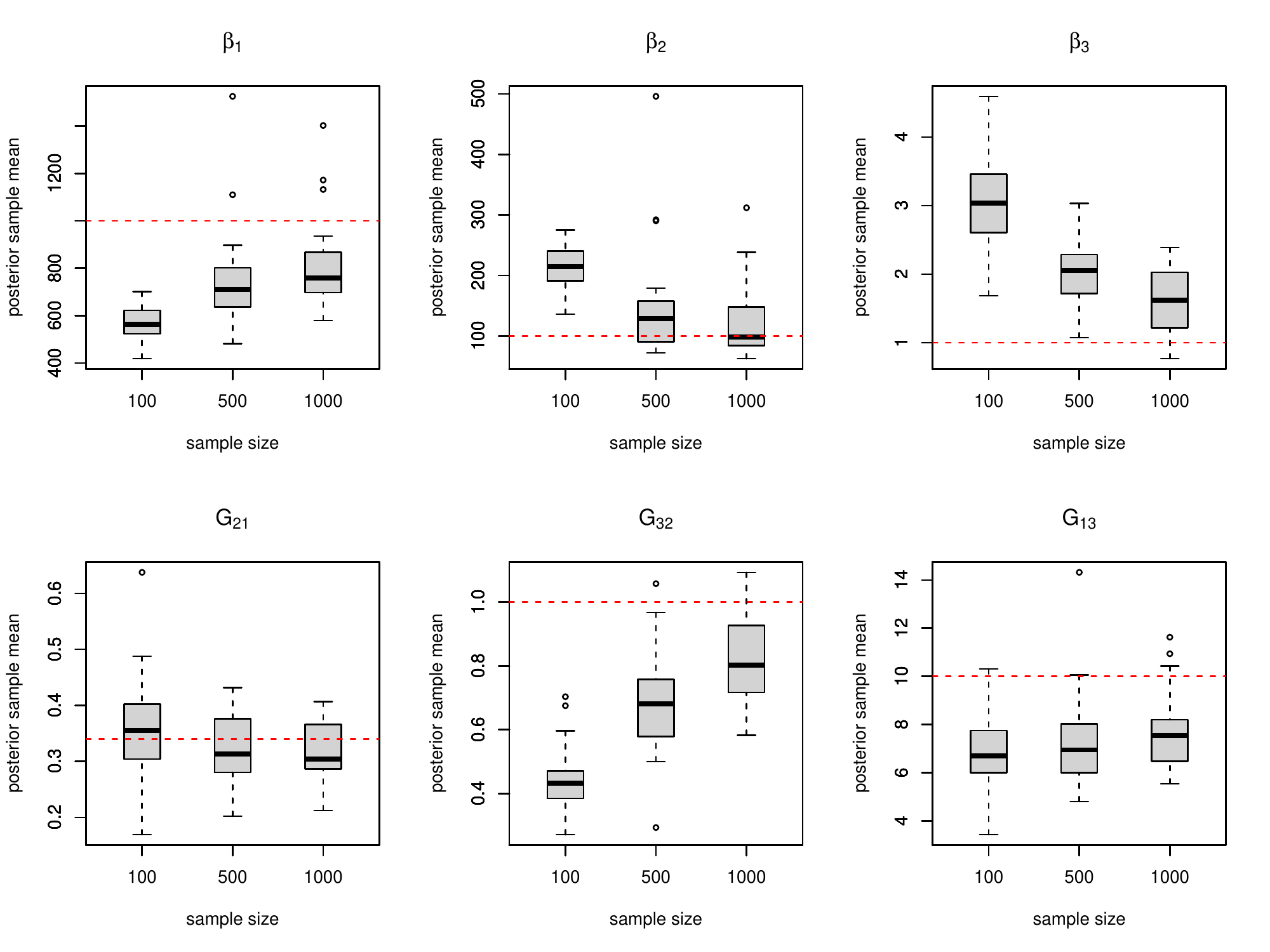}
\caption{Boxplots of the posterior means for the three-state Dirichlet model.}\label{fig:boxplot_3state_diri}
\end{figure}

\begin{figure}[H]
\includegraphics[width=0.75\textwidth]{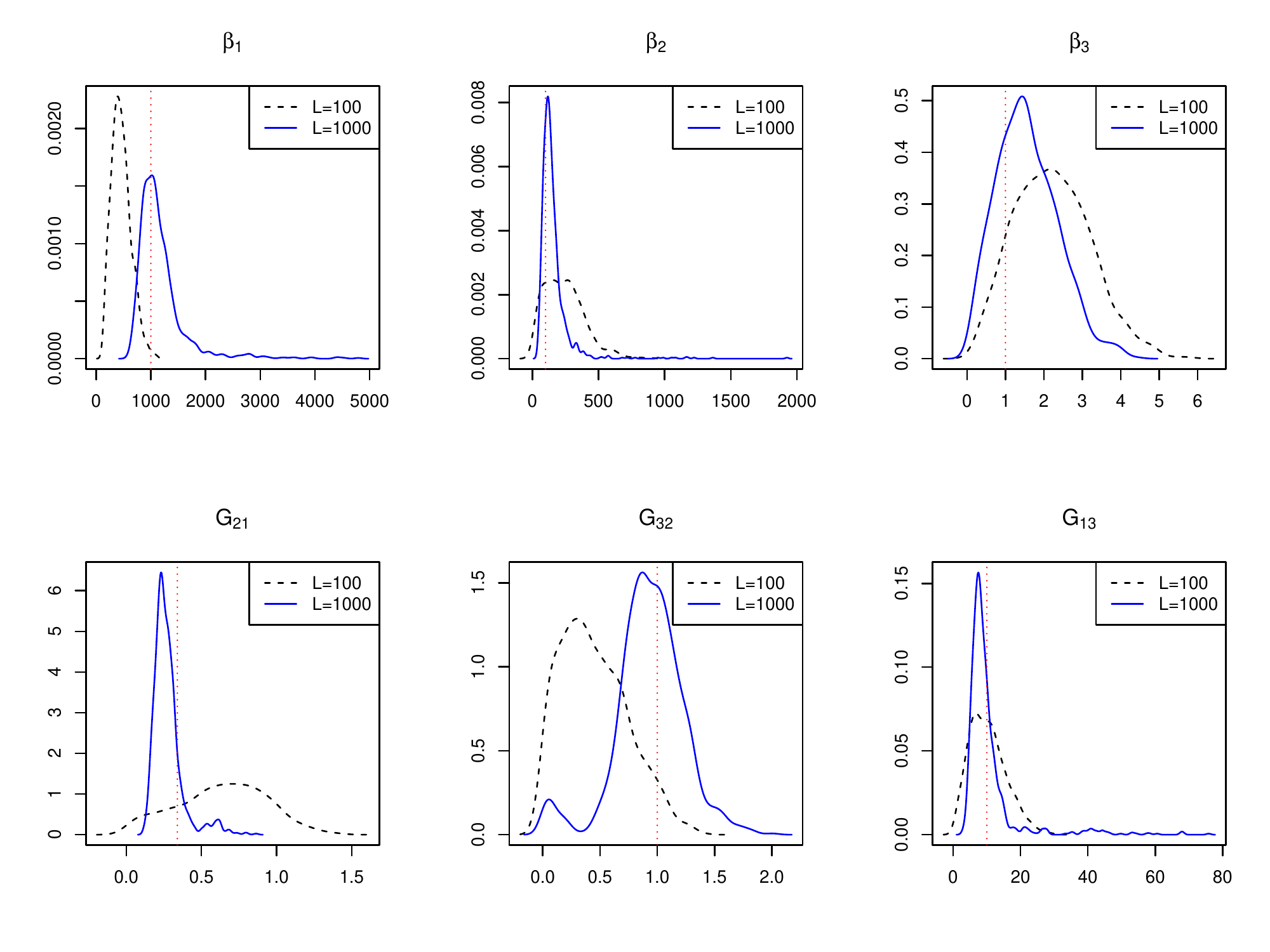}
\caption{Posterior densities of the parameters in the three-state Dirichlet model.}\label{fig:density_3state_diri}
\end{figure}

\begin{figure}[H]
\includegraphics[width=0.7\textwidth]{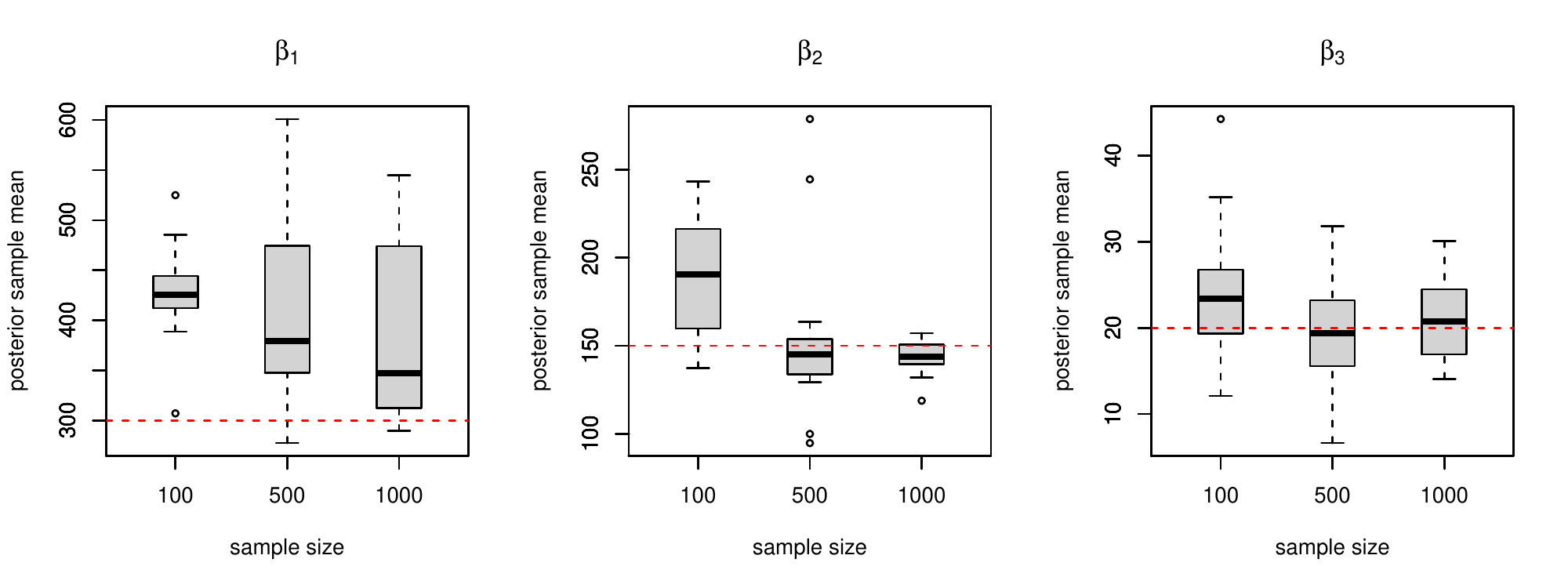}
\caption{Boxplots of the posterior means for $\beta$ in the symmetric three-state model.}\label{fig:boxplot_3state_beta}
\end{figure}

\begin{figure}[H]
\centering
\includegraphics[width=\textwidth]{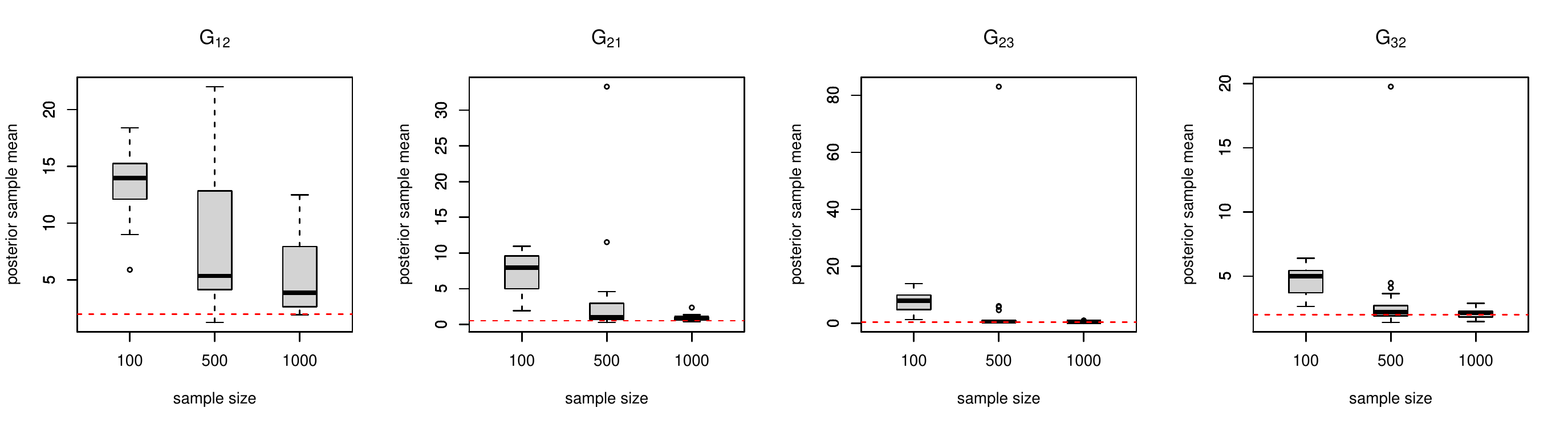}
\caption{Boxplots of the posterior means for $G$ in the symmetric three-state model.}\label{fig:boxplot_3state_G}
\end{figure}

\begin{figure}[H]
\includegraphics[width=0.75\textwidth]{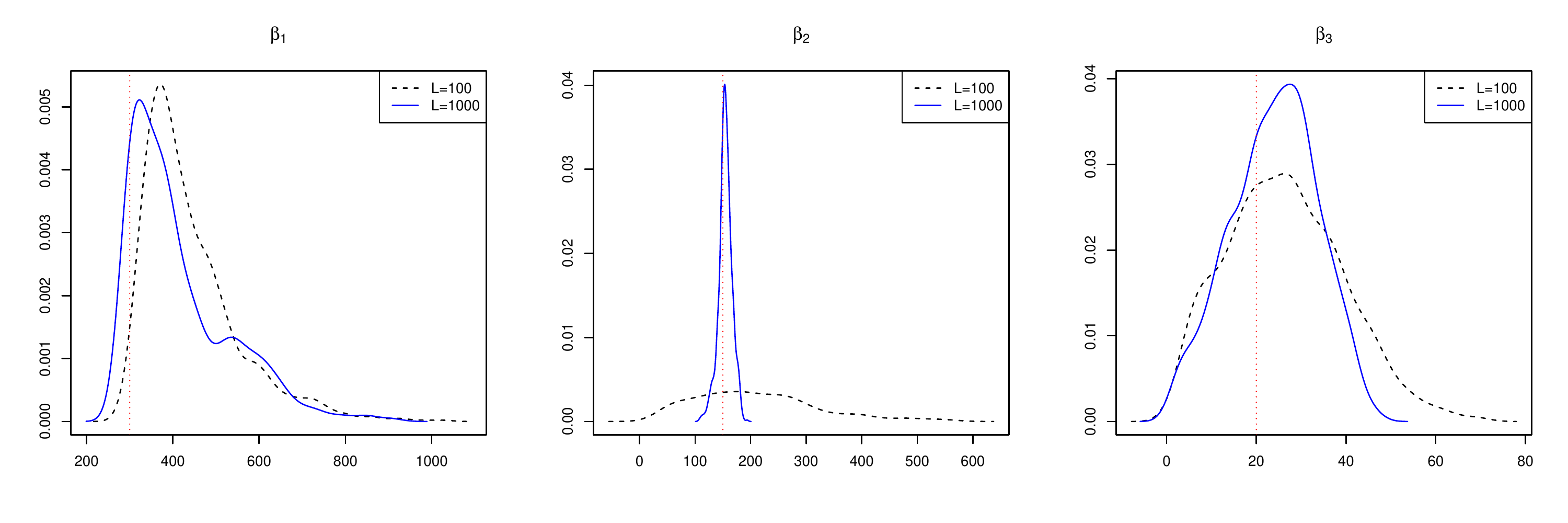}
\caption{Posterior densities of $\beta$ in the symmetric three-state model.}\label{fig:density_3state_beta}
\end{figure}

\begin{figure}[H]
\centering
\includegraphics[width=\textwidth]{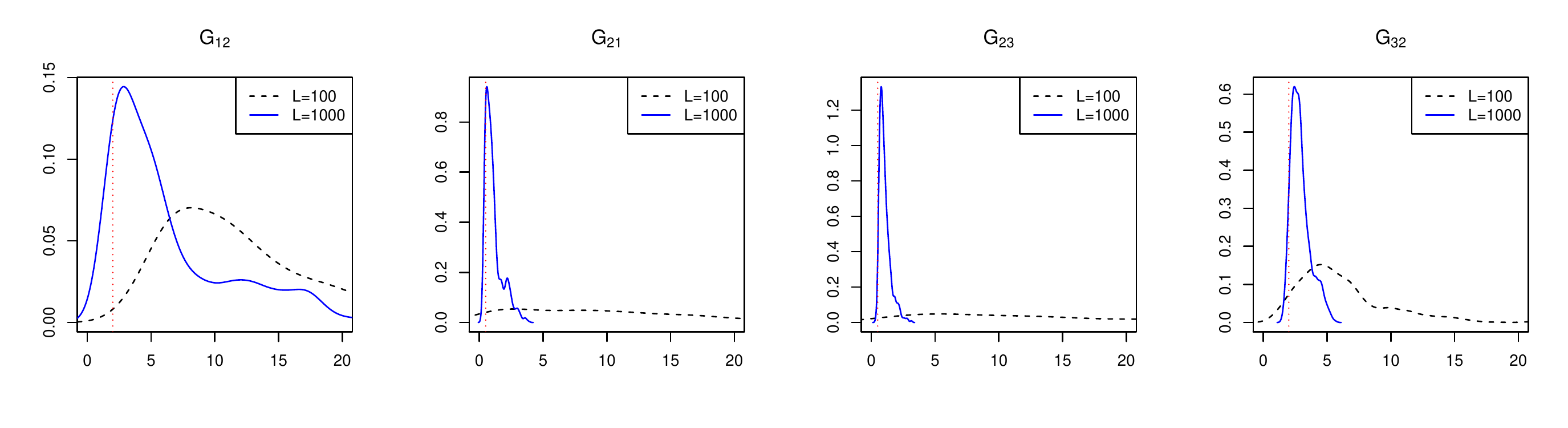}
\caption{Posterior densities of $G$ in the symmetric three-state model.}\label{fig:density_3state_G}
\end{figure}

\begin{figure}[H]
\includegraphics[width=0.75\textwidth]{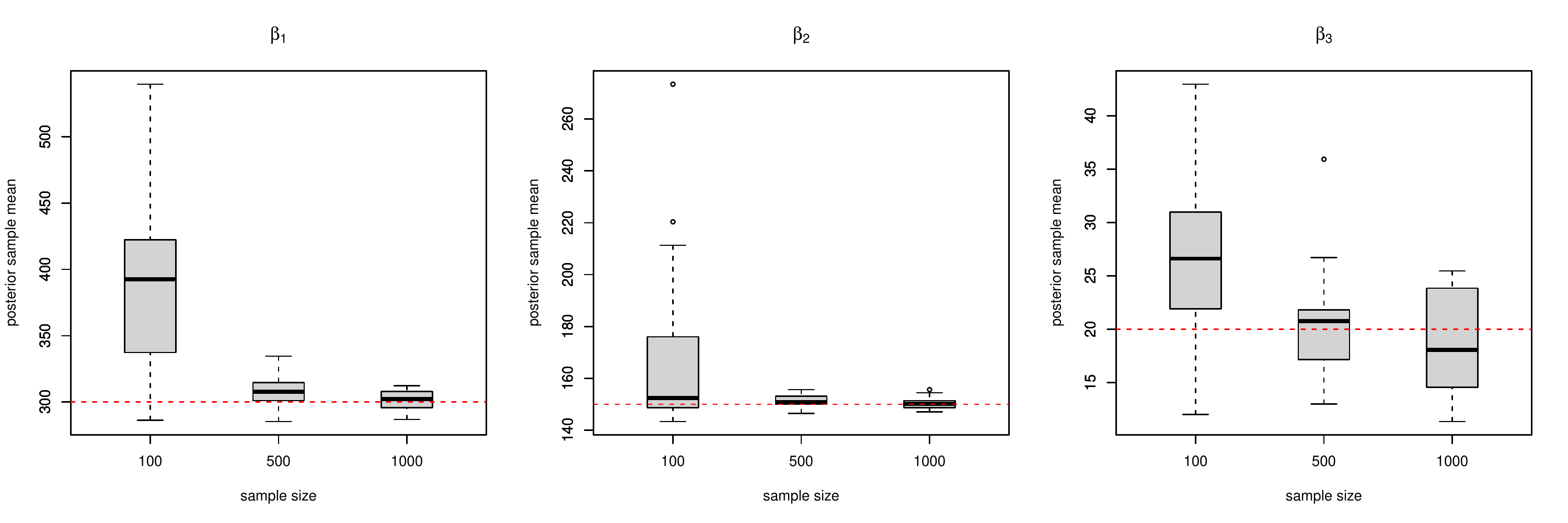}
\caption{Boxplots of the posterior means for $\beta$ in the asymmetric three-state model.}\label{fig:boxplot_3state2_beta}
\end{figure}

\begin{figure}[H]
\centering
\includegraphics[width=\textwidth]{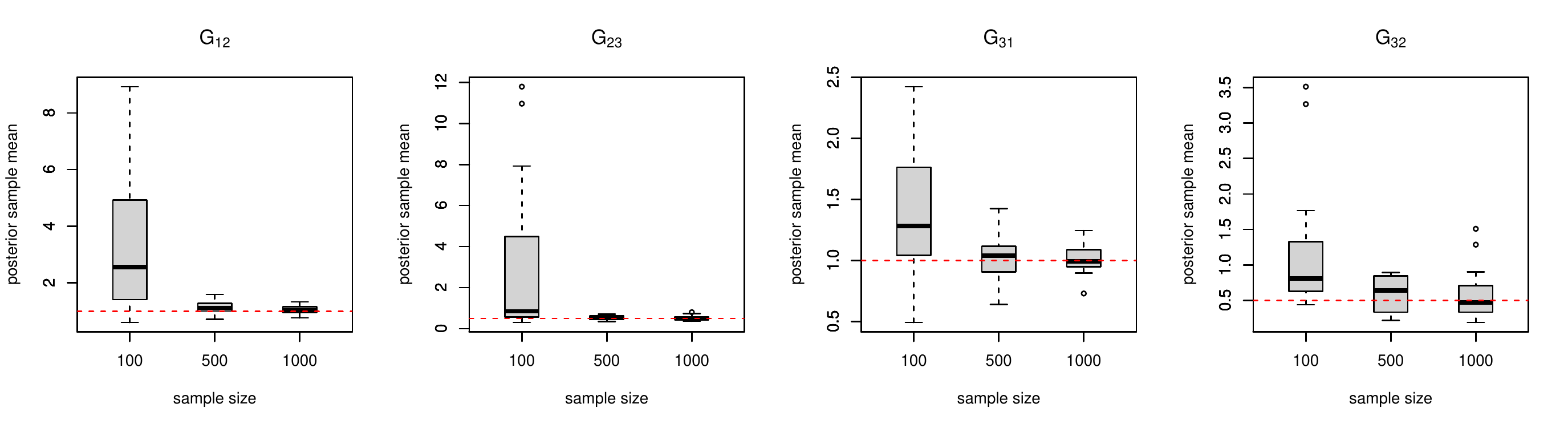}
\caption{Boxplots of the posterior means for $G$ in the asymmetric three-state model.}\label{fig:boxplot_3state2_G}
\end{figure}

\begin{figure}[H]
\includegraphics[width=0.75\textwidth]{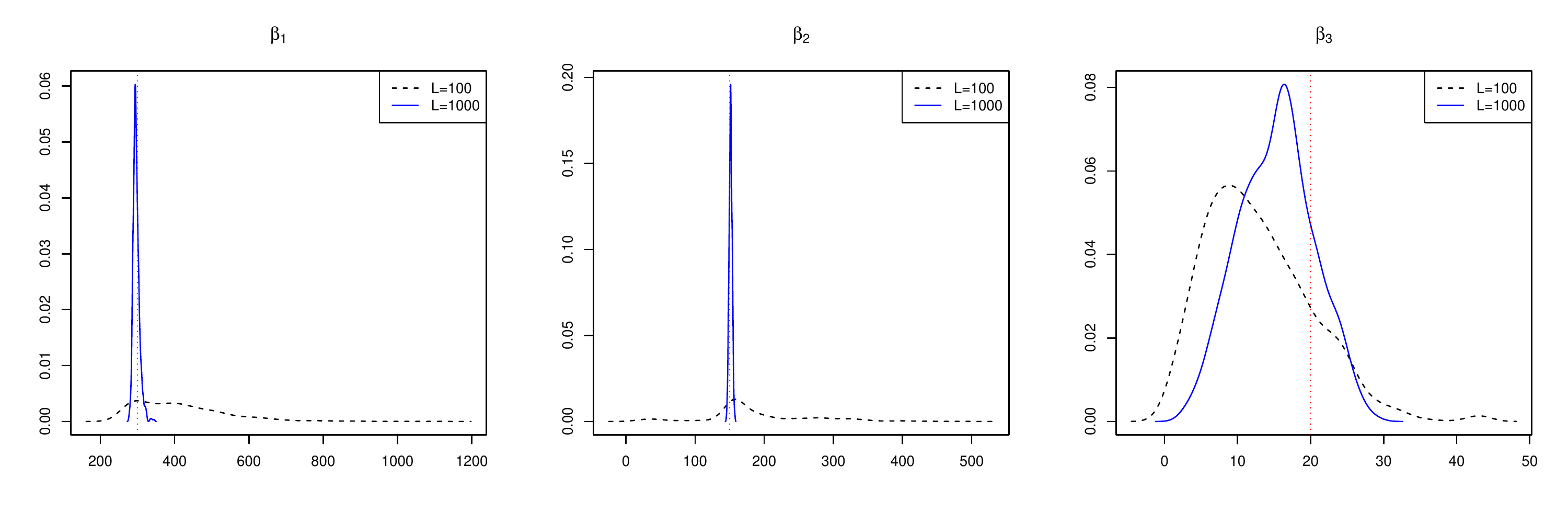}
\caption{Posterior densities of $\beta$ in the asymmetric three-state model.}\label{fig:density_3state2_beta}
\end{figure}

\begin{figure}[H]
\centering
\includegraphics[width=\textwidth]{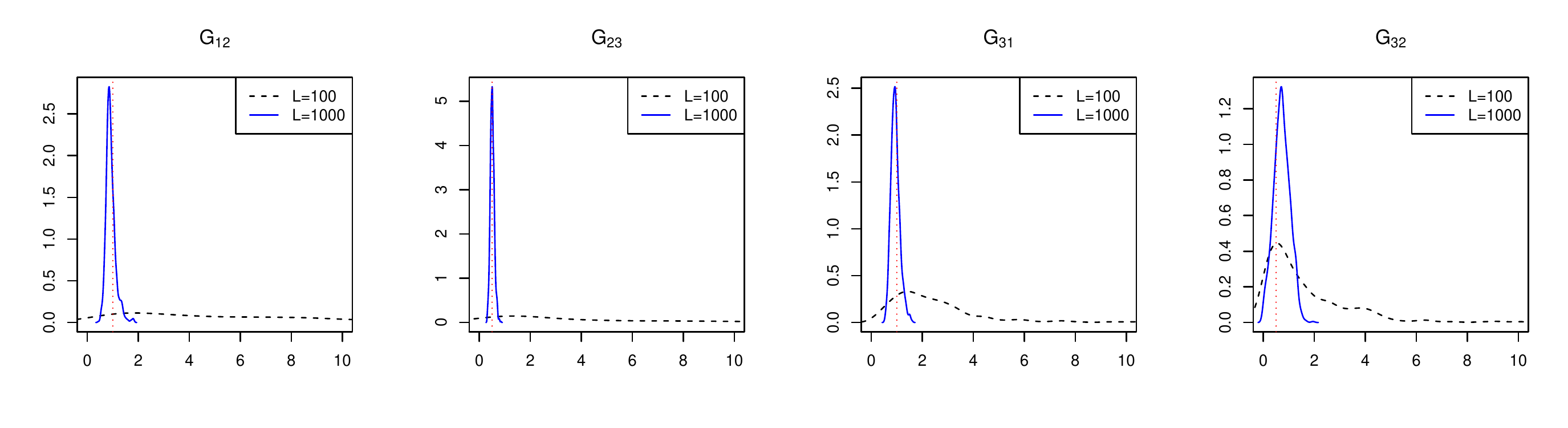}
\caption{Posterior densities of $G$ in the asymmetric three-state model.}\label{fig:density_3state2_G}
\end{figure}

\subsection{Model selection.}\label{sec:selection}
In the previous section, parameters are estimated with the assumption that the structure of the multistate promoter model (the number of states, the position of zero elements, etc.) is known. 
In this section, we consider selecting an appropriate model structure according to the Bayesian Information Criterion (BIC) \cite{Schwarz1978}. 

Suppose we want to choose from several models with different structures.
Given observed data $M_1, \ldots, M_L$, for each candidate model, the model parameter can be estimated using the procedure described in Section \ref{subsec:estimation}. Recall that $\eta$ denote the unconstraint parameter vector. Let $q$ denote the dimension of $\eta$ and $\hat\eta$ denote the estimated value of $\eta$. The BIC for a model with parameter vector $\eta$ is defined as 
$$\text{BIC} = -2 \log f(M_1, \ldots, M_L \mid \hat\eta) + \log (L) q,$$
where $f(M_1, \ldots, M_L \mid \hat\eta)$ is the probability of observing the sample under the model when $\eta = \hat \eta$. 
After computing the BIC for each candidate model, the model with the smallest BIC is chosen as the most appropriate one.

In general, a more complex model (a model with more parameters) produces a higher $f(M_1, \ldots, M_L \mid \hat\eta)$. However, an overly complex model is undesired in practice as it brings in instability in statistical inference without improving much the explanatory power. If two models give similar $f(M_1, \ldots, M_L \mid \hat\eta)$, the one with the fewer parameters is favored by BIC due to the term $\log(L) q$. Therefore, BIC can help us select the simplest model that explains the observed data reasonably well.

We demonstrate the model selection approach again using synthetic experiments. We still consider three choices of sample size $L$, 100, 500, and 1000. For each choice, 20 datasets are generated from the three-state model in Section \ref{sec:model3}.

There we estimated the nonzero parameters in $\beta$ and $G$ assuming the number of states and structures of $\beta$ and $G$ are known. Differently, in this section, we examine whether the correct underlying model (the number of states and the nonzero elements) can be identified from several candidate models using the BIC framework.

The candidate models are the following:
\begin{itemize}
\item the true model, that is the three-state model with $G_{13} = G_{31} = 0$,
\item a two-state model with all parameters being nonzero,
\item a three-state refractory promoter model with $\beta_2 = \beta_3 = G_{13} = G_{31} = 0$, and
\item a general three-state model with all parameters being nonzero. 
\end{itemize}

For each dataset, we fit all four candidate models and compute BIC for each model fit. The model with the lowest BIC is selected. Table \ref{table:3state_selection} gives the distributions of the selected model among the $20$ datasets. It shows that the model selection accuracy improves as the sample size increases. We note that the true model is selected for all $20$ datasets when $L=1000$.

\begin{table}[htb]
\centering
\caption{Distribution of the selected model among 20 replications.}\label{table:3state_selection}
\begin{tabular}{ccccc}
\hline
 & True & Refractory & Two-state & Full three-state \\
 \hline
 $L=100$ & 0 & 20 & 0 & 0\\
 $L=500$ & 14 & 0 & 6 & 0\\
 $L=1000$ & 20 & 0 & 0 & 0\\ 
 \hline
\end{tabular}
\end{table}

\section{Clumped constructions and proof of Theorem \ref{protein_thm}}
\label{clumped_sect}

Viewing the unbounded model where $(E(t), M(t))$ serves as a `promoter', define 
\begin{align*}
	\tilde e & =(e,m)\hspace{1cm}\tilde m=p\hspace{1cm}\tilde\beta_{(e,m)}=\alpha m\hspace{1cm}\tilde\delta=\gamma\\
	\\
	\tilde G^\infty_{(i,m),(j,n)} & =G_{i,j}\mathbbm 1(i\neq j,m=n)+\beta_i\mathbbm 1(i=j,n=m+1)+\delta m\mathbbm 1(i=j,n=m-1)\\
	 & \hspace{1cm}-\mathbbm 1(i=j,m=n)\left[-G_{i,i}+\beta_i+\delta m\right]
\end{align*}
As commented before Theorem \ref{protein_thm}, note that $\tilde G^\infty$ 
is not a bounded generator matrix since its diagonal entries grow unbounded with $m$, that is $\theta(\tilde G^\infty)=\infty$. As a result, $\tilde G^\infty$ cannot be normalized by some value of $\theta$ such that $I+\tilde G^\infty/\theta$ is a stochastic kernel, preventing consideration of non-clumped stick-breaking measure construction of the stationary distribution $\pi_2^\infty$ of the unbounded model parameterized by $\tilde G^\infty$ as in the discussion of the `bounded' mRNA-protein model.

We will however derive a clumped stick-breaking form of the stationary distribution $\pi_2^\infty$ of the process $(E^\infty(t), M^\infty(t), P^\infty(t))$ through a limit with respect to the `bounded' mRNA-protein model $(E^c(t), M^c(t), P^c(t))$ (c.f. Definition \ref{defnproteinbounded}).
To this end, we view the bounded mRNA-protein model and its stationary distribution $\pi_2^c$ on the full state space $(\tilde e,\tilde m)\in(\X\times\mathbb N_0)\times\mathbb N_0$, explicitly denoting dependence on the integer-valued capacity parameter $c$:
$$\tilde e  =(i,m)\hspace{1cm}\tilde m=p\hspace{1cm}\tilde\beta_{(e,m)}=\alpha m\hspace{1cm}\tilde\delta=\gamma$$
and
$$\tilde G^c_{(i,m),(j,n)}=\tilde G^\infty_{(i,m),(j,n)}\mathbbm 1(m,n\le c)$$
with the necessary modification of $\tilde G^c_{(i,c),(i,c)}$ to accord with the formula \eqref{G^cformula} and to preserve generator structure.  Note that for all $m<c$ that
\begin{align}
 \tilde G^{c,*}_{(i,m),(i,m)}=\tilde G^{\infty,*}_{(i,m),(i,m)}\label{Gsamediag}
\end{align}

By inspection of the rates, we may couple the bounded and unbounded processes so that $E^c(t)\equiv E^\infty(t)$, and also $M^c(t)\leq M^\infty(t)$ and $P^c(t)\leq P(t)$, and hence also in the $t\uparrow\infty$ limit.  Since the stationary distribution $\pi_2^\infty$ of the unbounded process integrates $e^{\epsilon_1i+\epsilon_2m+\epsilon_3p}$ for some $\epsilon_1,\epsilon_2,\epsilon_3>0$, the stationary measures $ \pi_2^c \sim (E^c, M^c, P^c)$ indexed in $c$ are tight.

We now show weak convergence of the clumped stick-breaking forms of $\pi_2^c$ to $\pi_2^\infty$.  Given uniform exponential moments, then the joint moments of $(M^c)^k(P^c)^\ell$ would converge to those of $(M^\infty)^k(P^\infty)^\ell$.  In this way, the last statement of Theorem \ref{protein_thm} would hold.

Let $\mu^c
=\pi_1^c(i,m|G,\beta,\delta)$ 
be the unique stationary measure of $(E^c(t), M^c(t))$ having support $\{(i,m):m\le c\}$. 
Let also $\mu^\infty=\pi_1(i,m|G,\beta,\delta)$ be the established stationary distribution $\pi_1$ of $(E^\infty(t), M^\infty(t))$, the usual mRNA portion of the (unbounded) multistate promoter process. Note that this stationary distribution is unique as every two states of the process $(E^\infty(t), M^\infty(t))$ are in the same communication class.

We now argue that $\mu^c$ converges to $\mu^\infty$, which is the stationary distribution of $\tilde G^\infty$.   
We have $\tilde G^c$ converges pointwise to $\tilde G$ as $c\uparrow\infty$,and that $\tilde G^c$ is banded (with respect to lexicographical ordering of states $(i,m)$). By consideration of the balance equation $\mu^c\tilde G^c = 0$, it follows that limit points $\mu_\text{lim}$ satisfy the balance equation $\mu_\text{lim}\tilde G^\infty=0$ which has unique solution $\mu^\infty$.  Hence, $\mu^c$ converges to $\mu^\infty$.

Similarly, we argue that $\pi_2^c$ converges to $\pi_2^\infty$. Specifically, let $\check G^c$ be the generator associated with the process $(E^c(t),M^c(t),P^c(t))$ for $0\le c\le\infty$. The generators $\check G^c$ are banded for an appropriate choice of ordering on states $(e,m,p)$ and converge pointwise to $\check G^\infty$. Since the sequence $\{\pi_2^c\}_{c\ge 0}$ is also tight, every limit point $\pi_{2,\text{lim}}$ of the sequence must be a distribution which satisfies $0=\pi_{2,\text{lim}}\check G^\infty$. Since $\pi_2^\infty$ is the only such distribution, $\pi_2^c$ converges to $\pi_2^\infty$.

We now consider the clumped stick-breaking construction with respected to the bounded model. 
For each value of $c<\infty$, define a Markov chain $\bf S^c$ on state space $\X\times \{0,1,\ldots, c\}$ with initial measure $\mu^c$ and non-repeating transition kernel
$$K^c_{\tilde e,\tilde f}=\frac{\tilde G_{\tilde e,\tilde f}^{c,*}}{-\tilde G_{\tilde e,\tilde e}^{c,*}}\mathbbm 1(\tilde e\neq\tilde f)+\mathbbm 1(\tilde e=\tilde f \ and\ \tilde e_2>c).$$
Note that this Markov chain only reaches states with $m\le c$.

Given $\bf S^c$, let $\bf Y^c$ be an independent sequence of $Y^c_j\sim$Beta$(1,-\tilde G^{c,*}_{S^c_j,S^c_j}/\gamma)$ variables, and ${\bf R^c} = \{Y^c_j\prod_{i=1}^{j-1}(1-Y^c_i)\}_{j\geq 1}$ be constructed from $\bf Y^c$ as a residual allocation model. Define the stick-breaking measure
$$X^c(\ \cdot \ )=\sum_{j=1}^\infty R^c_j\delta_{S^c_j}(\ \cdot\ ).$$

Now, since by Theorem \ref{cormultistatestationaryconstruction} and the clumped representation afforded by Corollary \ref{propaltMSBM}, we know that if
$$P^c|({\bf S^c,R^c})\sim\text{Poisson}\left(\frac{\alpha}{\gamma}\sum_{(e,m)}m\ X^c(e,m)\right)$$
and $S_1^c=(E^c,M^c)\sim  \mu^c$, then the stationary distribution $\pi_2^c\sim (E^c, M^c, P^c)$ for the bounded joint process can be written in terms of $\mu^c$ and the Poisson mixture $P^c|{\bf S^c, R^c}$.

We now show that one can take a limit as $c\rightarrow\infty$.
Since (a) $\mu^c$ converges pointwise to $\mu^\infty$ of $\tilde G^\infty$ and (b) the uniformly banded matrices $\tilde G^c$ converge entrywise to $\tilde G^\infty$, we have
\begin{align}
	\lim_{c\rightarrow\infty}K_{\tilde e,\tilde f}^c & =\lim_{c\rightarrow\infty}\frac{\tilde G_{\tilde e,\tilde f}^{c,*}}{-\tilde G_{\tilde e,\tilde e}^{c,*}}\mathbbm 1(\tilde e\neq\tilde f)=\lim_{c\rightarrow\infty}\frac{[D(\mu^c)^{-1}\tilde G^{c,T}D(\mu^c)]_{\tilde e,\tilde f}}{-\tilde G_{\tilde e,\tilde e}^{c}}\mathbbm 1(\tilde e\neq\tilde f)\nonumber\\
	 & =\frac{[D(\mu^\infty)^{-1}\tilde G^{\infty,T}D(\mu^\infty)]_{\tilde e,\tilde f}}{-\tilde G_{\tilde e,\tilde e}^\infty}\mathbbm 1(\tilde e\neq\tilde f)\nonumber\\
	 & =\frac{\tilde G_{\tilde e,\tilde f}^{\infty,*}}{-\tilde G_{\tilde e,\tilde e}^{\infty,*}}\mathbbm 1(\tilde e\neq\tilde f) =K_{\tilde e,\tilde f}^\infty.
	 \label{KctoK}
\end{align}

Let now $\bf S^\infty$ be a Markov chain with initial distribution $\mu^\infty$ and kernel $K^\infty$. Given $\bf S^\infty$, let $\bf Y^\infty$ be an independent sequence of $Y^\infty_j\sim$Beta$(1,-\tilde G^{\infty,*}_{S_j^\infty,S_j^\infty}/\gamma)$ variables, and $\bf R^\infty$ be constructed from $\bf Y^\infty$ as a residual allocation model.

	Let $s^n=\{s_j\}_{j=1}^n$ be a deterministic sequence of states $s_j=(e_j,m_j)$.  
	Since $\tilde G^{c,*}_{\tilde e,\tilde e}$ converges to $\tilde G^{\infty, *}_{\tilde e,\tilde e}$ as $c\uparrow\infty$, the conditional distribution of $\{R^c_j\}_{j=1}^n|\{S^c_j\}_{j=1}^n=s^n$ converges to that of $\{R^\infty_j\}_{j=1}^n|\{S^\infty_j\}_{j=1}^n=s^n$ as $c\uparrow\infty$.

	Also, as $ \mu^c$ converges to $ \mu^\infty$ and $K^c_{\tilde e,\tilde f}$ converges to $K^\infty_{\tilde e, \tilde f}$, the distribution of $\{S^c_j\}_{j=1}^n$ converges to that of
	 $\{S^\infty_j\}_{j=1}^n$ as $c\uparrow\infty$.

We conclude then, since $1=\sum_{j\geq 1}R^\infty_j = \sum_{j\geq 1} R^c_j$ for each $c$, as $c\uparrow\infty$ that 
$X^c(\cdot)$ converges weakly to 
$$X^\infty(\ \cdot\ )=\sum_{j=1}^\infty R^\infty_j\delta_{S^\infty_j}(\ \cdot\ ),$$
and $P^c|({\bf S^c, R^c})\sim {\rm Poisson}\left(\alpha\sum_{(e,m)}m\ X^c(e,m)\right)$ converges weakly to
$$P^\infty|({\bf S^\infty,R^\infty})\sim\text{Poisson}\left(\frac{\alpha}{\gamma}\sum_{(e,m)}m\ X^\infty(e,m)\right)$$
and $S_1^c=(E^c,M^c)\sim \mu^c$ converges to $S_1^\infty = (E^\infty, M^\infty)\sim \mu^\infty$.

Hence, 
$$\lim_{c\rightarrow\infty}(E^c,M^c,P^c)\ \stackrel d=\ (E^\infty,M^\infty,P^\infty)\ \sim\ \pi_2^\infty(\ \cdot\ ,\ \cdot\ ,\ \cdot\ |G,\beta,\delta,\alpha,\gamma),$$
and we conclude the proof of Theorem \ref{protein_thm}.
\qed

\section{Summary and conclusion}
\label{conclusion}

Through relations between seemingly disparate stick-breaking Markovian measures, empirical distribution limits of certain time-inhomogeneous Markov chains, and Poisson mixture representations of stationary distributions in multistate mRNA promoter models, we identify the stationary joint distribution of promoter state, and mRNA level via a constructive stick-breaking formula.  Moreover, we also consider protein interactions influenced by mRNA levels and find a stick-breaking formulation of the joint promoter, mRNA and protein levels.  Interestingly, this formula with respect to un-bounded protein levels involves a `clumped' representation of the stick-breaking measure.  These results constitute what seem to be a significant advance over previous work, which approximate stationary distributions or restrict solvable computations to specialized settings.

Importantly, the stick-breaking construction allows to sample directly from the stationary distribution, permitting inference procedures for parameters as well as model selection. Such a feature improves over sampling from the stationary distributions by running the process for a length of time. 
Our experiments show that, for various choices of the model settings, the inference procedures based on the stick-breaking construction are able to estimate model parameters accurately and select the underlying model correctly when the sample size is sufficiently large.
In addition, the form of the stationary distribution allows to compute mixed moments between mRNA and protein levels, which might bear upon correlation analysis as in \cite{1Albayrak}.

Although in principle the `stick-breaking' apparatus can be used to identify stationary distributions in linear chains of reactions, a natural problem for future work is to understand the role of `feedback' in constructing the stationary distribution in more general networks, say those where protein or mRNA levels influence promoter switching rates. 
 We have also discussed the notion of identifiability of parameters 
 and believe mRNA levels $M$ can identify the promoter switching rates $G$ and intensities $\beta$ when the $\beta=\{\beta_i\}$ components are known to be distinct.  
Our numerical results indicate that  this is the case.
Finally, of course, a next step is to understand inference of parameters and model selection from laboratory cell readings.

\medskip
{\bf Acknowledgements.}
W.L and S.S.~were partly supported by grant
ARO-W911NF-18-1-0311.

\end{document}